\theoremstyle{plain}
\newtheorem{theorem}{Theorem}[section]
\newtheorem{lemma}[theorem]{Lemma}
\newtheorem{proposition}[theorem]{Proposition}
\newtheorem{corollary}[theorem]{Corollary}
\theoremstyle{definition}
\newtheorem{definition}[theorem]{Definition}
\theoremstyle{definition}
\theoremstyle{remark}
\newtheorem{remark}{Remark}
\renewcommand{\d}{\textup{d\,}}
\def\@seccntformat#1{\csname the#1\endcsname.\quad}
\def\section{\@startsection{section}{1}{\z@}%
{-3.5ex \@plus -1ex \@minus -.2ex}%
{2.3ex \@plus.2ex}%
{\Large\bf}}
\def\subsection{\@startsection{subsection}{2}{\z@}%
{-3.25ex\@plus -1ex \@minus -.2ex}%
{1.5ex \@plus .2ex}%
{\bf\large}}
\def\subsubsection{\@startsection{subsubsection}{3}{\z@}%
{-3.25ex\@plus -1ex \@minus -.2ex}%
{1.5ex \@plus .2ex}%
{\normalfont\normalsize\bf}}
\def\blfootnote{\xdef\@thefnmark{}\@footnotetext}
\newcommand{\N}{\mathbb{N}}
\newcommand{\R}{\mathbb{R}}
\newcommand{\e}{\varepsilon}
\newcommand{\dd}{\partial}
 \newcommand{\be}{\begin{equation}}
 \newcommand{\ee}{\end{equation}}
 \newcommand{\bd}{\begin{displaymath}}
 \newcommand{\ed}{\end{displaymath}}
 \newcommand{\bea}{\begin{eqnarray}}
 \newcommand{\eea}{\end{eqnarray}}
 \newcommand{\beas}{\begin{eqnarray*}}
 \newcommand{\eeas}{\end{eqnarray*}}
 \newcommand{\bc}{\begin{center}}
 \newcommand{\ec}{\end{center}}
 \newcommand{\pa}{\partial}
 \newcommand{\lap}{\Delta}
 \def \O{\Omega}
\title[Tangential touch in a semilinear free boundary problem in 2D]{Tangential touch between the free and the fixed boundary in a semilinear free boundary problem in two dimensions}
\date{\today}
\author{Mahmoudreza Bazarganzadeh}
\address{Department of Mathematics, Uppsala  University,
Box 480, 751 06 Uppsala, Sweden}
\email{reza@math.uu.se}
\thanks{}
\author{Erik Lindgren}
\address{Department of Mathematical Sciences, Norwegian university of science and technology, Sentralbygg 2, Alfred Getz vei 1, 7491 Trondheim, Norway}
\email{erik.lindgren@math.ntnu.no}
\thanks{}
\keywords{elliptic equation, free boundary, two-phase semilinear problem, monotonicity formula, contact point}
\subjclass[2000]{Primary 35R35}
\begin{document}


\maketitle

\begin{abstract}
We study minimizers of the functional
\beas
\int_{B_1^+}(|\nabla u|^2+2(\lambda^+(u^+)^p+\lambda^-(u^-)^p))\d x,
\eeas
where $B_1^+=B_1\cap \{x: x_1>0\}$, $\,\lambda^{\pm}$ are two positive constants and $0<p<1$. In two dimensions, we prove that the free boundary is a uniform $C^1$ graph up to the flat part of the fixed boundary. Here, the free boundary refers to the boundaries of the sets $\{\pm u>0\}$.

\end{abstract}

\section{Introduction}
\renewcommand{\theequation}{1.\arabic{equation}}\setcounter{equation}{0}

\subsection{Problem setting}
Let $u^\pm = \max\{\pm u,0\}$, $\Pi=\{x\in \R^n: x_1=0 \}$ and consider minimizers of the functional
\be\label{eq:minimizer}
 E(u)=\int_{B_1^+} \big(|\nabla u|^2+2\lambda^+(u^+)^{p}+2\lambda^-(u^-)^{p}\big)\d x,
\ee
over
 $$\mathcal{K}=\{u\in W^{1,2}(B_1^+)\,:\,u=0\,\,\text{on }{B_1}\cap \Pi\text{ and }  u=f\text{ on } \pa B_1^+\setminus \Pi\}.$$ Here $B_1$ is  the unit ball in $\R^n$ and $$f\in W^{1,2}(B_1) \cap L^\infty(B_1),\,\, \lambda^\pm>0,$$
$$B_1^+ = B_1\cap \{x: x_1>0\},\,\,0<p<1.$$
By classical methods in calculus of variations it is straight forward to prove the existence of a minimizer.
 The corresponding Euler Lagrange formulation of (\ref{eq:minimizer}) reads
\begin{align}\label{eq:problem}
\left\{\begin{array}{lr}
\lap u  = p\big(\lambda^+(u^+)^{p-1}\chi_{\{u>0\}}-\lambda^-(u^-)^{p-1}\chi_{\{u<0\}}\big)& \text{ in } B_1^+\cap \{u\neq 0\},\\
u=f&\text{ on } \pa B_1^+\setminus \Pi,\\
u=0&\text{ on }B_1\cap \Pi.\end{array}\right.
\end{align}
Due to the singularity of the Euler Lagrange equation, it is not clear that any minimizer satisfies the equation everywhere. Moreover, since the energy is not convex, there might be more than one minimizer with given boundary data.

We use the notation $\O^+=\{u>0\}$, $\O^-=\{u<0\}$, $\Gamma^\pm=\pa \O^\pm $, $\Gamma=\Gamma^+\cup\Gamma^-$ and refer to $\Gamma $ as the \textit{free boundary}
 which is not known a priori, i.e, it is a part of the solution of the problem.


The main result of this paper concerns the behavior of the free boundary close to the fixed boundary $\Pi$, in two dimensions. In order to state our main theorem, we define the class of solutions within which we will work.

\begin{definition} Let $M,R$ be two positive constants. We define $P_R(M)$ to be the class of minimizers $u$ of
 (\ref{eq:minimizer}) in $B_R^+$ such that $0\in \Gamma\cap \Pi$ and
  \beas
\|u\|_{L^{\infty}(B_R^+)}\leq M.
\eeas
 \end{definition}
 \begin{remark}
If $u$ is a minimizer of (\ref{eq:minimizer}) in $B_1^+$ such that
  \beas
\|u\|_{L^{\infty}(B_1^+)}\leq M,
\eeas
and $x_0\in \Gamma\cap \Pi$ but $0\not\in \Gamma\cap \Pi$, one can by translating and rescaling $u$ obtain a function in  $P_1(M')$, for another constant $M'$.
\end{remark}
\begin{theorem}\label{thm:main}
 Let $u\in P_1(M)$ in dimension two. Then, in a neighborhood of the origin, the free boundary is a $C^1$ graph with a modulus of continuity depending only on $M$.
 \end{theorem}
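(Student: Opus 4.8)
The overall strategy is the standard one for tangential-touch results in free boundary problems: classify the possible blow-ups at a contact point $0 \in \Gamma \cap \Pi$, use a monotonicity formula to show the blow-up is unique and rotation-free, and then transfer this infinitesimal flatness to a genuine $C^1$ regularity statement via an iteration/compactness argument that yields a uniform modulus of continuity. First I would establish the basic regularity and nondegeneracy for $u \in P_1(M)$: optimal $C^{1,\alpha}$ interior estimates (the right-hand side of \eqref{eq:problem} is bounded away from the free boundary, and near it one gets growth control from the minimality), together with the nondegeneracy $\sup_{B_r(x_0)} u^\pm \geq c r^{2/(2-p)}$ at free boundary points — this growth rate being dictated by the homogeneity of the functional, since $(u^\pm)^p$ scales like the Dirichlet energy precisely at homogeneity $\kappa := 2/(2-p) \in (1,2)$. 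I would also record the relevant scaling: $u_r(x) := r^{-\kappa} u(rx)$ is again a minimizer of the same type of functional (with the same $\lambda^\pm, p$) on $B_{1/r}^+$, and the $L^\infty$ bound is preserved by nondegeneracy plus the growth estimate.

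Next I would set up a Weiss-type monotonicity formula adapted to the half-ball with the Dirichlet condition on $\Pi$: something of the form
\beas
W(r,u) = r^{-n-2\kappa+2}\int_{B_r^+}\big(|\nabla u|^2 + 2\lambda^+(u^+)^p + 2\lambda^-(u^-)^p\big)\,\d x - \kappa\, r^{-n-2\kappa+1}\int_{\pa B_r^+ \setminus \Pi} u^2\,\d \mathcal H^{n-1},
\eeas
and show $W(\cdot,u)$ is monotone nondecreasing, with $W' \equiv 0$ forcing $u$ to be homogeneous of degree $\kappa$. The boundary condition $u=0$ on $\Pi$ is exactly what makes the boundary terms from integration by parts on the flat piece vanish, so the computation goes through as in the interior case. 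Monotonicity gives existence of blow-up limits $u_0 = \lim u_{r_j}$ that are $\kappa$-homogeneous global minimizers in a half-space, vanishing on $\Pi$. Here is where dimension two is essential: a $\kappa$-homogeneous function on $\R^2_+$ vanishing on $\{x_1=0\}$ is determined on each ray by its angular profile, and minimality plus the ODE for the profile should force $u_0$ to be one-signed near $\Pi$ and, after analyzing the admissible angular profiles, to be (a multiple of) a single explicit half-space solution depending only on $x_1$ — in particular the free boundary of the blow-up is $\Pi$ itself, i.e. the touch is tangential. (Two-phase configurations with the free boundary hitting $\Pi$ transversally get excluded by an energy comparison, again using $0<p<1$.)

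The final and hardest step is upgrading "every blow-up has free boundary tangent to $\Pi$" to "the free boundary is a $C^1$ graph with a uniform modulus." I would do this by a compactness/contradiction argument: if not, there is a sequence $u_k \in P_1(M)$ and scales $r_k \to 0$ along which the flatness of $\Gamma$ in $B_{r_k}^+$ does not improve below a fixed $\varepsilon_0$; rescaling and passing to the limit (using the regularity and nondegeneracy bounds for compactness, and the monotonicity formula to control the limit) contradicts the blow-up classification just established. The uniformity of the modulus then comes from the fact that the whole argument only used $M$, $p$, $\lambda^\pm$, $n=2$. I expect the genuine obstacles to be: (i) making the blow-up classification in $\R^2_+$ fully rigorous — ruling out homogeneous solutions with free boundary rays emanating into the interior requires a careful ODE analysis of angular profiles combined with the minimality (not just stationarity), since the problem is non-convex and the equation is singular where $u=0$; and (ii) ensuring the contact set $\Gamma \cap \Pi$ is well-behaved enough (e.g. that $u$ doesn't change sign infinitely often along $\Pi$ near $0$) so that the graph is single-valued — this may need a separate argument ruling out oscillation, likely again via the monotonicity formula applied at nearby contact points together with a nondegeneracy-based dichotomy.
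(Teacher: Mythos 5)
Your outline gets the preliminaries right (optimal growth, nondegeneracy, Weiss monotonicity, classification of $\beta$-homogeneous blow-ups in the half-plane), and the overall compactness philosophy is the same, but the final step diverges from the paper in a way worth pointing out. You propose to go straight from ``every blow-up is $c^{\pm}(x_1^\pm)^\beta$'' to the $C^1$ graph by a flatness-improvement iteration, essentially re-deriving free boundary regularity from scratch near $\Pi$. The paper instead \emph{reuses} the known interior free boundary regularity for the $0<p<1$ one-phase problem (Theorem 8.2 of Alt--Phillips~\cite{AP}), and only proves a \emph{cone condition} at contact: for any $\delta>0$ there are $\e,\rho>0$, uniform in $P_1(M)$, such that if a free boundary point $x$ has $x_1<\e$, then $\Gamma\cap B_\rho(x)$ lies in the flat cone $K_\delta(x)$ around $\Pi$ (Proposition~\ref{prop:cone}). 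That proposition is proved by contradiction in two regimes --- $x^j_1/r_j$ bounded versus $x^j_1/r_j\to\infty$ --- and in the second regime the Alt--Phillips interior result is what asymptotically pins the tangent of $\Gamma(v_j)$. The $C^1$ graph statement then follows by gluing: away from $\Pi$ the Alt--Phillips modulus is $\sigma(\cdot/x_1)$, near $\Pi$ the normal is forced close to $e_1$, and a three-case estimate gives a single uniform modulus. This gluing route is shorter and makes the uniformity in $M$ transparent, since both ingredients are already uniform.

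Two of your ``obstacles'' are resolved differently in the paper. The exclusion of two-phase contact is not done by an energy comparison but by a blow-up argument: after the classification Theorem~\ref{thm:homo} (which in turn rests on the cone-opening Lemma 4.2 of~\cite{LP}, the angular-profile analysis you anticipate), Corollary~\ref{cor:onephase} shows the origin must be a one-phase point, because a two-phase branching contact point would have a blow-up with both phases non-empty by nondegeneracy, which the classification forbids, while a two-phase point with $\nabla u\ne 0$ on $\Pi$ would have gradient perpendicular to $\Pi$ and hence be one-phase. This also settles your worry (ii) about sign oscillation of $u$ along $\Pi$: once the origin is known to be one-phase, $u$ has a sign in a neighborhood and the graph is automatically single-valued. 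Finally, note the paper needs a slightly more flexible classification than ``blow-ups in the half-space'': Lemma~\ref{lem:globala} classifies global minimizers in $\{x_1>-A\}$ with $\beta$-growth at infinity, not just $\beta$-homogeneous ones, by squeezing the Weiss energy between blow-down and blow-up; that extra generality is exactly what the two regimes of Proposition~\ref{prop:cone} require.
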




\subsection{Known Result}
 The one-phase case of the problem, i.e, the case when $u$ does not change signs, has been well studied before. Phillips has proved in \cite{PH}  that minimizers are locally in $C^{1,\beta-1}$ for $\beta=2/(2-p)$.

Furthermore, Phillips (cf. \cite{Phil1}) and Alt and Phillips (cf. \cite{AP}) showed that the free boundary is fully regular in dimension two. For the two-phase case, when $u$ is allowed to change signs, it was proved in \cite{GG} that $u$ is locally $C^{1,\beta-1}$. Moreover, the second author and Petrosyan proved in  \cite{LP}, the $C^1$ regularity of the free boundary in dimension two. However, none of these results say anything about the behavior near a fixed boundary, they are all interior results.

For the  particular case of the problem  when $p=0$, Alt, Caffarelli and Friedman introduced in \cite{ACF} a monotonicity formula and showed the optimal
 Lipschitz regularity of minimizers and the $C^1$ regularity of the free boundary in dimension two. In the case $p=1$, equation
 (\ref{eq:problem}) reduces to the two-phase obstacle problem which was introduced by Weiss in \cite{WE2}. For this problem, Ural'tseva and Shahgholian proved in \cite{UR} and \cite{Sha03} the optimal $C^{1,1}$ regularity. Furthermore, in \cite{SUW}, Shahgholian, Ural'tseva and Weiss proved
the $C^1$ regularity of the free boundary close to so called \textit{branching points} (see Section 2). The mentioned results are all interior regularity results. But for the cases $p=0$ and $p=1$ there are also some results concerning the behavior of the free boundary near the fixed boundary. See for instance \cite{AMM} and \cite{KKS} where it is proved for $p=1$ and $p=0$ respectively, that the free boundary approaches the fixed one in a tangential fashion.

\subsection{Organization of the paper}
The paper is organized as follows:
\begin{itemize}
\item In Section 2, we shall introduce the notion of blow-ups and also the different notions of free boundary points.
\item In Section 3, we prove $C^{1,\alpha}$-estimates up to the fixed boundary.
 \item In Section 4, we state and prove some technicalities that are important for the rest of the paper, such as growth estimates,
non-degeneracy, classification of global minimizers and Weiss's monotonicity formula.
\item In Section 5, we prove the main result.
\end{itemize}

\section{Free boundary points and the notion of blow-ups}
\renewcommand{\theequation}{2.\arabic{equation}}\setcounter{equation}{0}
 Suppose that $u$ is a minimizer of (\ref{eq:minimizer}) and $x_0\in \Gamma$. Then we divide the free
boundary points into the following parts (see Figure \ref{fig:f2}):
\begin{figure}[!h]
\begin{center}
\input{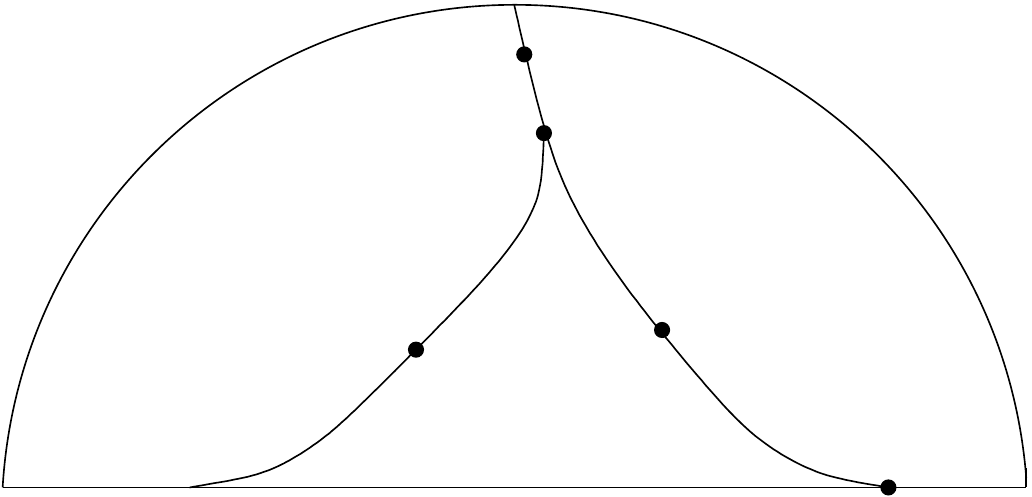_t}
    \caption{This figure illustrates the different types of the free boundary points.
The point $x_0$ is a positive one-phase free boundary point, $x_1$  is a negative one-phase point, $x_2$ is a negative one-phase point touching the fixed boundary, $x_3$ is a branching point and $x_4$ is a two-phase point which might or might not be a branching point.
}    \label{fig:f2}
\end{center}
\end{figure}
\begin{enumerate}
 \item We say that $x_0$ a positive (negative) one-phase free boundary point if there exist a neighborhood of
 $x_0$ such that $u$ is non-negative (non-positive) in it. In other words,
$x_0\in \Gamma^+\setminus \Gamma^-$($x_0\in \Gamma^-\setminus \Gamma^+$).
\item We say that $x_0$ is a two-phase free boundary point if $x_0\in \Gamma^+\cap \Gamma^-$.
 Moreover, if $|\nabla u(x_0)|=0$ then $x_0$ is said to be a branching point.
 \end{enumerate}

A useful notion when studying properties of free boundary problems is the so-called blow-ups.

\begin{definition} For a given  minimizer  $u$ of (\ref{eq:minimizer}), $x_0\in \Gamma$ (one phase or branching point) we define the rescaled functions
\beas
u_{x_0,r}(x)=\frac{u(x_0+rx)}{r^\beta},\quad \beta=\frac{2}{2-p},\quad r>0.
\eeas
In the case $x_0=0$ we use the notation $u_r=u_{0,r}$. If we can find a sequence $u_{x_0,r_j},\,r_j\rightarrow 0$ such that
$$
u_{x_0,r_j}\rightarrow u_0\quad \text{ in } \, C^1_{\text{loc}}(\R^n\cap \{x_1>0\}) \textup{ (or $C^1_{\text{loc}}((\R^n))$)},
$$
we say that $u_0$ is a \textit{blow-up} of $u$ at $x_0$. It is easy to see that $u_0$ is a global minimizer of (\ref{eq:minimizer}), i.e., a minimizer in $\R^n\cap \{x_1>0\}$ or in $\R^n$, with a certain growth condition (see below).
\end{definition}

We also define the following class of global minimizers:
\begin{definition} Let $M$ be a positive constant. We define $P_\infty(M)$ to be the class of minimizers $u$ of
 (\ref{eq:minimizer}) in $\R^n\cap \{x_1>0\}$ such that $0\in \Gamma\cap \Pi$ and
  \beas
\|u\|_{L^{\infty}(B_R^+)}\leq MR^\beta,
\eeas
for all $R>0$.
 \end{definition}

\section{Regularity}
\renewcommand{\theequation}{3.\arabic{equation}}\setcounter{equation}{0}

\renewcommand{\theequation}{3.\arabic{equation}} \setcounter{equation}{0}
In this section we will prove that any minimizer is $C^{1,\alpha}$ up to the fixed boundary. It is possible that parts of the results in this section can be found in the literature, however we have not been able to find any good reference for that. For instance, in \cite{GG} the interior $C^1$-regularity is proved for minimizers of functionals of the type \eqref{eq:minimizer}, but nowhere can any statement about the regularity up to the fixed boundary be found, even though the technique properly used, probably would imply the same regularity up to the boundary in this case.
\begin{lemma}\label{lem:linfty}
 (Estimates in $L^\infty$) Let $u$ be a minimizer of (\ref{eq:minimizer}). Then $u\in L^\infty(B_1^+)$ and we have the estimate
 $$
 \|u\|_{L^\infty(B_1^+)}\leq C(p,\|f\|_{L^\infty}).
 $$
\end{lemma}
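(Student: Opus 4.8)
The plan is to argue by truncation, exploiting only the minimality of $u$ (and not the Euler--Lagrange equation, which a priori need not hold everywhere for a minimizer). This will in fact yield the sharp bound $\|u\|_{L^\infty(B_1^+)}\le\|f\|_{L^\infty(B_1)}$, and since $\|f\|_{L^\infty(B_1)}$ is trivially a constant of the form $C(p,\|f\|_{L^\infty})$, the lemma follows.

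Set $k:=\|f\|_{L^\infty(B_1)}$ and define the competitor $v:=\max\{-k,\min\{k,u\}\}$. First I would verify that $v\in\mathcal{K}$: clearly $v\in W^{1,2}(B_1^+)\cap L^\infty(B_1^+)$; on $\partial B_1^+\setminus\Pi$ we have $u=f$ with $|f|\le k$, so truncation does not change the trace and $v=f$ there; on $B_1\cap\Pi$ we have $u=0$, and since $-k\le 0\le k$ the truncation fixes the value $0$, so $v=0$ there. Thus $v$ is admissible. Next, by the chain rule for Sobolev functions $\nabla v=\chi_{\{|u|<k\}}\nabla u$ a.e., so $|\nabla v|\le|\nabla u|$ a.e.; and since $v^\pm=\min\{u^\pm,k\}\le u^\pm$ while $t\mapsto t^p$ is nondecreasing on $[0,\infty)$, also $(v^\pm)^p\le(u^\pm)^p$ pointwise. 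Adding these up gives $E(v)\le E(u)$.

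By minimality $E(u)\le E(v)$, hence $E(u)=E(v)$. Since
\[
E(u)-E(v)=\int_{B_1^+}|\nabla u|^2\chi_{\{|u|\ge k\}}\,\d x+2\la^+\int_{B_1^+}\big((u^+)^p-(v^+)^p\big)\,\d x+2\la^-\int_{B_1^+}\big((u^-)^p-(v^-)^p\big)\,\d x
\]
is a sum of three nonnegative terms that vanishes, each term is zero. As $(u^+)^p-(v^+)^p=u^p-k^p>0$ on $\{u>k\}$ and $(u^-)^p-(v^-)^p>0$ on $\{u<-k\}$, vanishing of the last two integrals forces $|\{u>k\}|=|\{u<-k\}|=0$, i.e.\ $|u|\le k$ a.e.\ in $B_1^+$.

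The argument contains no genuinely hard step: the only points requiring care are that truncation preserves the boundary conditions on \emph{both} the spherical part of $\partial B_1^+$ and the flat part $\Pi$, and the a.e.\ identity $\nabla v=\chi_{\{|u|<k\}}\nabla u$ --- both standard facts about Sobolev truncations. (One should also observe that $E(u)<\infty$, which holds because $u\in W^{1,2}(B_1^+)$ and $0<p<2$ force $(u^\pm)^p\in L^1(B_1^+)$ by Hölder, so that the identity $E(u)=E(v)$ is meaningful.) An alternative route would be a Stampacchia/De Giorgi truncation iteration, but that would first require establishing that $u$ solves the Euler--Lagrange equation, which is delicate because of the singular nonlinearity; the comparison argument above avoids this entirely.
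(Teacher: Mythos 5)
Your proof is correct, and it takes a genuinely different route from the paper's. The paper observes that on $\{u\neq 0\}$ the Euler--Lagrange equation \eqref{eq:problem} holds, sets $v=\max(u,1)$, notes that $v$ is (sub)harmonic there, and invokes the maximum principle (and symmetrically for $\max(-u,1)$) to get a bound of the form $\max(1,\sup f)+C(p,\lambda^\pm)$. You instead argue purely variationally: you compare $u$ against its two-sided truncation at level $k=\|f\|_{L^\infty}$, check that the truncation stays in $\mathcal{K}$ (both on the spherical part, where $|f|\le k$, and on $\Pi$, where $u=0$ is unchanged), and observe that truncation can only decrease each term of $E$, with strict decrease on $\{|u|>k\}$, so minimality forces $|\{|u|>k\}|=0$. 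Two advantages of your route: it yields the sharper and cleaner bound $\|u\|_{L^\infty(B_1^+)}\le\|f\|_{L^\infty(B_1)}$ without any extra additive constant, and it avoids relying on $u$ solving the PDE and on the tacit continuity of $u$ needed to make sense of the open sets $\{u>1\}$, $\{u<-1\}$ and to apply the maximum principle --- facts that the paper has not established at this point in the exposition. One small simplification: you do not need the Hölder-type remark that $E(u)<\infty$ a priori; since the truncated competitor $v$ has $E(v)<\infty$ (it is bounded with $L^2$ gradient), minimality already gives $E(u)\le E(v)<\infty$, after which the term-by-term comparison is legitimate.
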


\begin{proof} Any minimizer of \eqref{eq:minimizer} is a solution of (\ref{eq:problem}) when $\{u\neq 0\}$.
 Let $$v(x)=\max (u(x),1).$$ Then $\Delta v\geq -pC,$  for some positive constant $C$. By the maximum principle
\beas
\sup v\leq \max (1,\sup f)+C.
\eeas
Similar arguments for 
$$v(x)=\max (-u(x),1),$$
 show that $u$ is
 bounded from below and we will get 
 $$\|u\|_{L^\infty}\leq C(pC+\|f\|_{L^\infty}).$$
\end{proof}
\subsection{H\"older Regularity}
We can now prove that minimizers are H\"older continuous for all exponents less than one.  Throughout the rest of the paper, the harmonic replacement of a function $u$ in an open set $D$, will refer to the function $v$ satisfying
$$
\left\{\begin{array}{ll} \lap v = 0 &\textup{in $D$,}\\ u =v &\textup {on $\partial D$.}
\end{array}\right.
$$

\begin{proposition} (H\"older regularity) Let $u$ be a minimizer of \eqref{eq:minimizer}. Then for each $\gamma<1$ there is a constant $C=C(\gamma,\lambda^\pm,u,p,\|u\|_{L^\infty(B^+_1)})$ such that
$$
\|u\|_{C^{0,\beta}(\overline{B_\frac12^+})}\leq C.
$$
\end{proposition}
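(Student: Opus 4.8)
The plan is to prove the $C^{0,\gamma}$ bound by a standard comparison/Morrey-type argument, exploiting the fact that the nonlinear term is subcritical (since $0<p<1$, the power $(u^\pm)^p$ is bounded by a constant on the bounded set where $u$ lives, thanks to Lemma~\ref{lem:linfty}). First I would fix a point $x_0\in \overline{B_{1/2}^+}$ and, for small $r>0$, compare $u$ on $B_r(x_0)\cap B_1^+$ with its harmonic replacement $v$ in that set (taking $v=u$ on the part of the boundary lying on $\Pi$, which is compatible since $u=0$ there and $0$ is harmonic). Testing minimality of $u$ against $v$ and using that $v$ is harmonic, the gradient cross term drops out and one gets
\beas
\int_{B_r(x_0)\cap B_1^+}|\nabla(u-v)|^2\,\d x \leq C\int_{B_r(x_0)\cap B_1^+}\big(|u|^p+|v|^p\big)\,\d x \leq C\|u\|_{L^\infty}^p\, r^n,
\eeas
where the $L^\infty$ bound on $v$ comes from the maximum principle and Lemma~\ref{lem:linfty}.

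Next I would use the standard decay estimate for the Dirichlet integral of a harmonic function (with the Neumann/odd-reflection across the flat piece of $\Pi$ when $x_0$ is near or on the fixed boundary, so that the estimate is uniform up to $\Pi$): for $\rho<r$,
\beas
\int_{B_\rho(x_0)\cap B_1^+}|\nabla v|^2\,\d x \leq C\Big(\frac{\rho}{r}\Big)^n \int_{B_r(x_0)\cap B_1^+}|\nabla v|^2\,\d x.
\eeas
Combining with the comparison estimate via the triangle inequality yields
\beas
\int_{B_\rho(x_0)\cap B_1^+}|\nabla u|^2\,\d x \leq C\Big(\frac{\rho}{r}\Big)^n \int_{B_r(x_0)\cap B_1^+}|\nabla u|^2\,\d x + C r^n.
\eeas
A classical iteration lemma (e.g. Giaquinta's) then gives, for every $\mu<n$,
\beas
\int_{B_\rho(x_0)\cap B_1^+}|\nabla u|^2\,\d x \leq C \rho^{\mu},
\eeas
and by Morrey's Dirichlet growth theorem this is equivalent to $u\in C^{0,\alpha}$ locally with $\alpha=(\mu-n+2)/2$, which can be taken arbitrarily close to $1$. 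One has to be slightly careful that the comparison is valid on half-balls centered on $\Pi$; here the zero boundary condition on $\Pi$ and reflection make everything go through with constants independent of the distance to $\Pi$, which is what gives the uniform estimate on $\overline{B_{1/2}^+}$.

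The main obstacle, and the only place any real care is needed, is the geometry near the fixed boundary $\Pi$: one must run the comparison argument on sets of the form $B_r(x_0)\cap B_1^+$ with $x_0\in \Pi$, ensure the harmonic replacement respects the homogeneous Dirichlet data on $\Pi\cap B_1$, and obtain the harmonic decay estimate uniformly in such configurations. This is handled by odd reflection across $\{x_1=0\}$: since $u=0$ on $\Pi\cap B_1$, the odd extension $\tilde u$ is a minimizer of the reflected functional on a full ball, the nonlinear term remains bounded, and the interior argument applies to $\tilde u$, yielding the estimate up to $\Pi$ with no loss. The remaining ingredients — the energy comparison, the $L^\infty$ bound on the harmonic replacement, and the Morrey iteration — are entirely routine. (Note also the mild typo in the statement: the exponent in $\|u\|_{C^{0,\beta}}$ should read $C^{0,\gamma}$, matching the quantifier ``for each $\gamma<1$''.)
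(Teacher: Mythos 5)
Your proposal follows the same route as the paper: compare $u$ with its harmonic replacement $v$ in $B_r(x_0)\cap B_1^+$, use minimality plus the $L^\infty$ bound to get $\int|\nabla(u-v)|^2\le Cr^n$, combine with the scale-invariant decay of $\int|\nabla v|^2$ (obtained by odd reflection of the harmonic $v$ across $\Pi$, which is legitimate since $v=0$ there), iterate, and conclude by Morrey. The paper carries out the iteration by hand with $r=\sigma^{j+1}$, $R=\sigma^j$ rather than invoking an abstract iteration lemma, but that is cosmetic.

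The one point I would push back on is the closing sentence about handling the boundary by reflecting $u$ itself: the claim that ``the odd extension $\tilde u$ is a minimizer of the reflected functional on a full ball'' is not justified. Minimizers of \eqref{eq:minimizer} are not known to be unique (the energy is non-convex, as the paper remarks), and odd reflection only guarantees that $\tilde u$ is a solution of the Euler--Lagrange equation where $\tilde u\neq 0$, not that it minimizes; a full-ball competitor need not vanish on $\Pi$, so one cannot restrict and compare. Fortunately, this is not load-bearing: your first mechanism --- run the comparison directly on the (possibly truncated) half-balls, and reflect only the harmonic replacement $v$ to get the Dirichlet-integral decay estimate uniformly up to $\Pi$ --- is exactly what the paper does and suffices. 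I would simply delete the claim about $\tilde u$ being a minimizer. You are also right that the statement has a typo: the conclusion should read $\|u\|_{C^{0,\gamma}}\le C$ (not $C^{0,\beta}$), matching the quantifier.
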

\begin{proof} Take $x_0\in B_\frac12^+$ and let $0<r<\frac12$. The idea is to prove that for all $\gamma\in (0,1)$ there is a constant $C_\gamma$ independent of $r$ and $x_0$ such that
\be\label{eq:mor2}
\int_{B_r^+(x_0)}|\nabla u|^2\d x\leq C_\gamma r^{n-\gamma}.
\ee
By Morrey's embedding this will imply the desired result, see Theorem 7.19 in \cite{gt}. With $v$ as the harmonic replacement of $u$ in $B_r^+(x_0)$ we have, due to the Dirichlet principle,
$$
\int_{B_r^+(x_0)}|\nabla v|^2\d x\leq \int_{B_r^+(x_0)}|\nabla u|^2\d x\leq E(u).
$$
Since $v$ is harmonic and $u=v$ on $\partial B_r^+(x_0)$
$$
\int_{B_r^+(x_0)}|\nabla v-\nabla u|^2\d x=\int_{B_r^+(x_0)}|\nabla v|^2-|\nabla u|^2\d x.
$$
Putting these to together and using Lemma \ref{lem:linfty} we can conclude
\begin{align*}
\int_{B_r^+(x_0)}|\nabla v-\nabla u|^2\d x&\leq \int_{B_r^+(x_0)}2(\lambda_1(u^+)^p+\lambda_2 (u^-)^p)\d x\\
&\leq C(p,\lambda^\pm,\|f\|_{L^\infty(\partial B_1^+)})r^n.
\end{align*}

If $r<R<\frac12$ and $v$ is the harmonic replacement in $B_R^+(x_0)$ the estimate above implies via Young's inequality
\begin{align*}
\int_{B_r^+(x_0)}|\nabla u|^2\d x&\leq 2\int_{B_r^+(x_0)}|\nabla u-\nabla v|^2\d x+2\int_{B_r^+(x_0)}|\nabla v|^2\d x\\
&\leq 2\int_{B_r^+(x_0)}|\nabla u-\nabla v|^2\d x+2\int_{B_r^+(x_0)}|\nabla v|^2\d x\\
&\leq Cr^n+2C\left(\frac{r}{R}\right)^n\int_{B_R^+(x_0)}|\nabla v|^2 \d x \\
&\leq Cr^n+2C\left(\frac{r}{R}\right)^n\int_{B_R^+(x_0)}|\nabla u|^2 \d x,
\end{align*}
where we have again used that $v$ minimizes the Dirichlet energy and the estimate
$$
\int_{B_r^+(x_0)}|\nabla v|^2\d x\leq C\left(\frac{r}{R}\right)^n\int_{B_R^+(x_0)}|\nabla v|^2\d x,
$$
which follows from interior gradient estimates for harmonic functions, upon reflecting $v$ in an odd manner across $\Pi$. Taking $r=\sigma^{j+1}$ and $R=\sigma^j$ { where $\sigma $ is small enough and $j\in \N$ then} this turns into
$$
\int_{B_{\sigma^{j+1}}^+(x_0)}|\nabla u|^2\d x\leq C\sigma^{(j+1)n}+C\sigma^n\int_{B_{\sigma^j}^+(x_0)}|\nabla u|^2 \d x.
$$
Now it is clear that if \eqref{eq:mor2} holds for $r=\sigma^j$ for some $\gamma$ and $C_\gamma$, then the estimate above implies
\begin{align*}
\int_{B_{\sigma^{j+1}}^+(x_0)}|\nabla u|^2\d x&\leq C\sigma^{(j+1)n}+CC_\gamma\sigma^n\sigma^{j(n-\gamma)}\\
&\leq C_\gamma\sigma^{(j+1)(n-\gamma)}\left(\frac{C}{C_\gamma}+C\sigma^\gamma\right).
\end{align*}
If we choose $C_\gamma$ large enough and $\sigma$ small enough then
$$
\int_{B_{\sigma^{j+1}}^+(x_0)}|\nabla u|^2\d x \leq C_\gamma\sigma^{(j+1)(n-\gamma)}.
$$
Iterating this, yields \eqref{eq:mor2}.
\end{proof}
\subsection{$C^{1,\alpha}$-estimates up to the fixed boundary}
Now we turn our attention to the $C^{1,\alpha}$-regularity. The idea is to use the method in \cite{LS}. In what follows we will use the notation $B_r^+(x)=B_r(x)\cap \{x_1>0\}$.

We are going to employ the following result, which is a special case of Theorem I.2 in \cite{Cam}.

\begin{proposition}\label{prop:campanato} Let $u\in H^1(B_1^+)$. Assume there exist $C$, $\alpha$ such that for each $x_0\in B_\frac12^+$ there is a vector $A(x_0)$ with the property
\begin{equation}\label{eq:campanato}
\int_{B_r(x_0)\cap B_1^+}|\nabla u(x)-A(x_0)|^2\ d x\leq Cr^{n+2\alpha}{\text{ for every } r< \frac{1}{2}}.
\end{equation}
Then $u\in C^{1,\alpha}(\overline{B_\frac12^+})$ and we have the estimate
$$
\|u\|_{C^{1,\alpha}(\overline{B_\frac12^+})}\leq C_0(C).
$$
\end{proposition}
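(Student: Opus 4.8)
The plan is to reproduce the relevant part of Campanato's argument (Theorem~I.2 in \cite{Cam}): the statement is essentially the assertion that a vector field satisfying the decay \eqref{eq:campanato} belongs to a Campanato space which, on a nice domain, coincides with $C^{0,\alpha}$. The only geometric input is that $B_1^+$ has a uniform interior measure--density along the flat part $\Pi$ of its boundary, and this is the single point that needs any care. Write $B_r^+(x_0)=B_r(x_0)\cap B_1^+$ and note that for $x_0\in B_\frac12^+$ and $0<r<\frac12$ we have $B_r(x_0)\subset B_1$, so $B_r^+(x_0)=B_r(x_0)\cap\{x_1>0\}$; since the centre $x_0$ lies in $\{x_1\ge0\}$, at least half of $B_r(x_0)$ lies in $\{x_1>0\}$ and hence
\[
|B_r^+(x_0)|\ \ge\ \tfrac12\,|B_1|\,r^n,\qquad x_0\in B_\tfrac12^+,\ \ 0<r<\tfrac12 .
\]
Denote by $(\nabla u)_{x_0,r}$ the mean value of $\nabla u$ over $B_r^+(x_0)$.

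First I would check that the vector $A(x_0)$ in \eqref{eq:campanato} is uniquely determined and equals $\lim_{r\to0}(\nabla u)_{x_0,r}$. By Jensen's inequality, \eqref{eq:campanato} and the measure bound,
\[
\bigl|(\nabla u)_{x_0,r}-A(x_0)\bigr|^2\ \le\ \frac{1}{|B_r^+(x_0)|}\int_{B_r^+(x_0)}|\nabla u-A(x_0)|^2\,\d x\ \le\ \frac{2C}{|B_1|}\,r^{2\alpha}.
\]
Taking $r=\tfrac14$ here and using $\nabla u\in L^2(B_1^+)$ also shows that $x_0\mapsto A(x_0)$ is bounded on $B_\frac12^+$ by a constant depending only on $C$ and $\|u\|_{H^1(B_1^+)}$.

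Next I would prove that $A(\cdot)$ is $\alpha$-Hölder. Given $x_0,y_0\in B_\frac12^+$ with $\rho:=|x_0-y_0|<\tfrac14$, set $D:=B_\rho^+(x_0)$; then $D\subset B_{2\rho}^+(x_0)\cap B_{2\rho}^+(y_0)$ and $|D|\ge\tfrac12|B_1|\rho^n$, so estimating the mean of $\nabla u$ over $D$ against $A(x_0)$ and against $A(y_0)$ exactly as above gives $|A(x_0)-A(y_0)|\le C'\rho^\alpha$ for some $C'=C'(n,\alpha,C)$. Together with the boundedness just established, this yields $A\in C^{0,\alpha}(\overline{B_\frac12^+})$ with a quantitative bound $\|A\|_{C^{0,\alpha}(\overline{B_\frac12^+})}\le C_0$.

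Finally I would identify $A$ with $\nabla u$ and upgrade the regularity. By the Lebesgue differentiation theorem, at almost every $x_0\in B_1^+$ (note that for every $x_0\in B_1^+$ one has $B_r^+(x_0)=B_r(x_0)$ once $r$ is small enough) the averages $(\nabla u)_{x_0,r}$ converge to $\nabla u(x_0)$; by the first step they also converge to $A(x_0)$, whence $\nabla u=A$ a.e.\ in $B_\frac12^+$. Thus $u\in W^{1,2}(B_\frac12^+)$ has a gradient possessing a continuous ($C^{0,\alpha}$) representative, and since $\overline{B_\frac12^+}$ is convex, integrating $A$ along line segments shows that $u$ coincides a.e.\ with a function that is $C^1$ on $\overline{B_\frac12^+}$ with classical gradient $A$; hence $u\in C^{1,\alpha}(\overline{B_\frac12^+})$ with $\|u\|_{C^{1,\alpha}(\overline{B_\frac12^+})}\le C_0$. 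The one thing to watch in all of this is the uniform bound $|B_r^+(x_0)|\ge\tfrac12|B_1|r^n$, which is what lets the classical interior Campanato argument run up to the flat boundary $\Pi$ (it would fail at, say, an inward cusp); the remaining ingredients — Jensen, the two–ball comparison, the Lebesgue–point identification, and the passage from "$W^{1,2}$ with continuous gradient" to "$C^1$" — are routine.
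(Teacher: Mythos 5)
Your argument is correct, and it is precisely the proof of the result the paper appeals to: the paper gives no proof of Proposition \ref{prop:campanato} but simply cites Campanato's Theorem~I.2, whose standard proof (mean-value comparison across scales, Jensen, the two-ball trick, identification of the limit with $\nabla u$ at Lebesgue points, integration along segments in the convex set $\overline{B_{1/2}^+}$) is exactly what you have reproduced, with the half-ball measure--density bound $|B_r^+(x_0)|\ge\tfrac12|B_1|r^n$ correctly identified as the only geometric ingredient. One small precision you rightly make and the paper's statement glosses over: the constant in the $C^{1,\alpha}$ bound must also depend on some norm of $u$ (e.g.\ $\|u\|_{H^1(B_1^+)}$), not on $C$ alone, since adding a large constant to $u$ leaves \eqref{eq:campanato} unchanged while inflating $\|u\|_{C^{1,\alpha}}$.
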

The only non-standard in the proposition above is that we get $C^{1,\alpha}$-estimates up to the fixed boundary. Below we present a technical result concerning harmonic functions. First we just make the following remark.

\begin{remark} Let $x_0\in B_\frac12^+$. Then for any $r<\frac12$, we have the following estimates for any harmonic function $u$ in $B_r^+(x_0)$ which up to a linear part vanishes on $B_1\cap \Pi$:
\begin{equation}\label{eq:submean}
 \sup_{B_\frac{r}2^+(x_0)}|D^2u(x)|\leq \frac{C}{r^{n/2+1}}\left(\int_{B_r^+(x_0)}|\nabla u|^2\d x\right)^\frac12,
 \end{equation}
and
\begin{equation}\label{eq:gradest} |\nabla u(x_0)|\leq  \frac{C}{r^{n/2}}\left(\int_{B_r^+(x_0)}|\nabla u|^2\d x\right)^\frac12 .
\end{equation}
Moreover, for $\alpha\in [0,1)$ there holds
\begin{equation}\label{eq:c1gamma}
\|u\|_{C^{1,\alpha}(B_\frac{r}{2}^+(x_0))}\leq Cr^{1-\alpha}||\lap u||_{B_r^+(x_0)}+Cr^{-n/2-\alpha}\left(\int_{B_r^+(x_0)}|\nabla u|^2\d x\right)^\frac12.
\end{equation}
To obtain these estimates, assume $r=1$ and simply  reflect $u$ (except its linear part)  oddly  across $\Pi$. Then we can apply usual interior estimates in $B_{r/2}^+(x_0)\cup (B_{r/2}^+(x_0))^\textup{reflected}$. In particular, the estimate \eqref{eq:submean} will now  follow from rescaling the estimate
$$
 \sup_{B_\frac{1}2^+(x_0)}|D^2u(x)|\leq C \left(\int_{B_1^+(x_0)} u^2\d x\right)^\frac12\leq C\left(\int_{B_1^+(x_0)}|\nabla u|^2\d x\right)^\frac12,
$$
where the first estimate comes from interior $C^2$-estimates for harmonic functions (see Theorem 7 on page 29 in \cite{E}). Similarly, \eqref{eq:gradest} follows from rescaling the gradient estimate for harmonic functions
$$
|\nabla u(x_0)|\leq C \left(\int_{B_r^+(x_0)}u^2\d x\right)^\frac12 \leq C \left(\int_{B_r^+(x_0)}|\nabla u|^2\d x\right)^\frac12.
$$
Finally, \eqref{eq:c1gamma} is a consequence of interior $C^{1,\alpha}$-estimates for the Poisson equation (cf. Theorem 4.15 on page 68 in \cite{gt})
\begin{align*}
\|u\|_{C^{1,\alpha}(B_\frac{1}{2}^+(x_0))}&\leq C||\lap u||_{B_1^+(x_0)}+C\left(\int_{B_1^+(x_0)} u^2\d x\right)^\frac12\\
&\leq C||\lap u||_{B_1^+(x_0)}+C\left(\int_{B_1^+(x_0)}|\nabla u|^2\d x\right)^\frac12.
\end{align*}
\end{remark}
\begin{lemma}\label{lem:sigmatheta} Let $x_0\in B_{1/2}^+$ and $v$ be harmonic in $B_r^+(x_0)$ and assume also that $v$ vanishes on $B_1\cap \Pi$  up to a linear part. Then for $\sigma<1$ there holds
$$
\int_{B_{\sigma r}^+(x_0)}|\nabla v(x)-\nabla v(x_0)|^2\d x\leq C\sigma^{n+2}\int_{B_r^+(x_0)}|\nabla v(x)|^2\d x.
$$
\end{lemma}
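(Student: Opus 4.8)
The plan is to exploit that $\nabla v$ is itself (componentwise) harmonic, together with the Campanato/Morrey-type decay of $L^2$-averages of harmonic functions, after reducing to the interior case by the odd reflection already described in the preceding remark. First I would dispose of the linear part: write $v = \ell + w$ where $\ell$ is the linear function that $v$ equals on $B_1\cap\Pi$ and $w$ vanishes on $\Pi$. Since $\ell$ is linear, $\nabla\ell$ is constant, so $\nabla v(x)-\nabla v(x_0)=\nabla w(x)-\nabla w(x_0)$ and it suffices to prove the estimate for $w$, with $\int_{B_r^+(x_0)}|\nabla w|^2$ on the right-hand side controlled by $\int_{B_r^+(x_0)}|\nabla v|^2$ up to a harmless constant (expanding the square and using that the cross term integrates against the constant $\nabla\ell$; the linear term only helps). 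Now extend $w$ by odd reflection across $\Pi$ to a harmonic function on the full ball $B_r(x_0)\cup (B_r(x_0))^{\textup{reflected}}$, which contains a genuine interior ball around $x_0$ of radius comparable to $r$ (here one uses $x_0\in\{x_1>0\}$, but crucially the constant must not blow up as $x_0\to\Pi$, which is exactly why the reflection is needed).

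The core estimate is then the classical one: if $h$ is harmonic in $B_\rho(y)$ then for $\sigma<1$
\[
\int_{B_{\sigma\rho}(y)}|h-h(y)|^2\,\d x \leq C\sigma^{n+2}\int_{B_\rho(y)}|h - h(y)|^2\,\d x \leq C\sigma^{n+2}\int_{B_\rho(y)}|h|^2\,\d x,
\]
which follows by expanding $h$ in a Taylor series at $y$ (or equivalently in solid harmonics) and noting each term of degree $k\geq 1$ contributes a factor $\sigma^{n+2k}\leq\sigma^{n+2}$, while the degree-$0$ term cancels. Applying this with $h$ equal to each partial derivative $\partial_i w$ (harmonic, since $w$ is), $y=x_0$, and $\rho$ comparable to $r$, and summing over $i$, gives
\[
\int_{B_{\sigma r}(x_0)}|\nabla w - \nabla w(x_0)|^2\,\d x \leq C\sigma^{n+2}\int_{B_{cr}(x_0)}|\nabla w|^2\,\d x,
\]
for some $\sigma$ rescaled to absorb the constant $c$. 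Finally, restricting the left integral to the half-ball $B_{\sigma r}^+(x_0)\subset B_{\sigma r}(x_0)$, and bounding the right-hand integral over the reflected ball by twice the integral over $B_r^+(x_0)$ (odd reflection preserves $|\nabla w|$), yields the claim. One must check the bookkeeping of the various radii so that $B_{cr}(x_0)$ after reflection is covered by the reflected copy of $B_r^+(x_0)$; absorbing these fixed geometric constants into $\sigma$ and $C$ is routine.

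The only point requiring care—and the main obstacle—is making sure the constant $C$ is \emph{uniform in $x_0$ up to the fixed boundary}: a naive interior estimate centered at $x_0$ would use a ball of radius $x_1=\dist(x_0,\Pi)$, which degenerates as $x_0\to\Pi$. The odd-reflection trick is precisely what repairs this, since after reflection $x_0$ sits at the center of a ball of radius $\asymp r$ in a domain where the reflected function is harmonic; everything else is the standard decay of $L^2$-averages for harmonic functions plus elementary scaling. I would therefore structure the write-up as: (i) peel off the linear part, (ii) reflect, (iii) quote the harmonic $L^2$-decay for $\nabla w$, (iv) undo the reflection and restrict to the half-ball.
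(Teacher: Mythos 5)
Your plan follows essentially the same route as the paper's proof: subtract the linear part, extend by odd reflection across $\Pi$ to obtain a function harmonic on an interior ball, apply standard interior estimates for harmonic functions, and restrict back to the half-ball. The only real difference is the interior estimate invoked: the paper quotes the $D^2$-estimate \eqref{eq:submean} from the preceding Remark and then applies the mean value theorem to $\nabla v$ before integrating, whereas you invoke the classical $L^2$-decay (Campanato) estimate for harmonic functions applied componentwise to $\nabla w$. Both are standard and of essentially equal strength; your version stays in $L^2$ throughout and is arguably a bit cleaner. Two small remarks. First, your justification that $\int_{B_r^+}|\nabla w|^2\leq C\int_{B_r^+}|\nabla v|^2$ after peeling off the linear part (``the cross term integrates against the constant $\nabla\ell$; the linear term only helps'') is not actually an argument: the cross term is $2\nabla\ell\cdot\int_{B_r^+}\nabla w$, and it does not vanish merely because $\nabla\ell$ is constant --- controlling it uses the boundary condition $w=0$ on $\Pi$, the normalization $\nabla\ell\cdot e_1=0$, and the reflection structure. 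The paper sidesteps this by stating \eqref{eq:submean} directly with $\|\nabla u\|_{L^2}$ on the right-hand side, but the same step is tacitly hidden in the derivation of that Remark, so this is a shared rough edge rather than a defect of your approach; still, it deserves a line of justification. Second, the radius bookkeeping you worry about at the end is in fact trivial: for $x_0$ in the upper half-space, the union $B_r^+(x_0)\cup (B_r^+(x_0))^{\mathrm{reflected}}$ always contains the full ball $B_r(x_0)$ (a one-line computation with $|x-x_0|$ versus $|x-x_0^*|$), so no loss of radius occurs and no absorption into $\sigma$ is needed.
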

\begin{proof} From the estimates for the second derivatives for harmonic functions we have
$$
\sup_{B_{\sigma r}^+(x_0)}|D^2v(x)|\leq \frac{C}{r^{n/2+1}}\left(\int_{B_r^+(x_0)}|\nabla v|^2\d x\right)^\frac12,
$$
from which it follows that for $x\in B_{\sigma r}^+(x_0)$
$$
|\nabla v(x)-\nabla v(x_0)|^2\leq \frac{C\sigma r}{r^{n+2}}\int_{B_r^+(x_0)}|\nabla v|^2\d x.
$$
If we integrate this over $B_{\sigma r}^+(x_0)$ we obtain
\begin{align*}
\int_{B_{\sigma r}^+(x_0)}|\nabla v(x)-\nabla v(x_0)|^2\d x\leq C\sigma^{n+2}\int_{B_r^+(x_0)}|\nabla v|^2\d x.
\end{align*}
\end{proof}

Now we are ready to prove the desired estimate.

\begin{proposition}($C^{1,\alpha}$-estimates)\label{prop:c1a} Let $u$ be a minimizer of \eqref{eq:minimizer}. Then there are constants $\alpha=\alpha(\lambda^\pm,u,p,\|u\|_{L^\infty(B_1)})$ and $C=C(\lambda^\pm,u,p,\|u\|_{L^\infty(B_1)})$ such that
$$
\|u\|_{C^{1,\alpha}(\overline{B_\frac12^+})}\leq C.
$$
\end{proposition}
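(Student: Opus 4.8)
The plan is to verify the Campanato-type condition \eqref{eq:campanato} from Proposition \ref{prop:campanato} and then invoke that proposition. Fix $x_0\in B_{1/2}^+$ and $0<r<\tfrac14$. The natural candidate for the vector $A(x_0)$ is $\nabla v_r(x_0)$, where $v_r$ is the harmonic replacement of $u$ in $B_r^+(x_0)$ (reflected oddly across $\Pi$ so that the interior harmonic estimates from the Remark apply); but since $A(x_0)$ must be a single vector independent of $r$, I would instead show that $\nabla v_r(x_0)$ is Cauchy as $r\to 0$ at a geometric rate and take $A(x_0)$ to be its limit. The two ingredients are: first, the energy comparison already established in the proof of the H\"older proposition, namely
\[
\int_{B_r^+(x_0)}|\nabla u-\nabla v_r|^2\,\d x\leq C\int_{B_r^+(x_0)}2\big(\lambda^+(u^+)^p+\lambda^-(u^-)^p\big)\,\d x\leq Cr^n,
\]
using Lemma \ref{lem:linfty}; and second, the decay estimate for harmonic functions from Lemma \ref{lem:sigmatheta}, which gives $\int_{B_{\sigma r}^+(x_0)}|\nabla v_r-\nabla v_r(x_0)|^2\,\d x\leq C\sigma^{n+2}\int_{B_r^+(x_0)}|\nabla v_r|^2\,\d x\leq C\sigma^{n+2}\int_{B_r^+(x_0)}|\nabla u|^2\,\d x$, where the last inequality is the Dirichlet principle.

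The core is a standard Campanato/Morrey iteration. Set $\phi(r)=\int_{B_r^+(x_0)}|\nabla u-\nabla v_r(x_0)|^2\,\d x$ (or, cleaner, the infimum of $\int_{B_r^+(x_0)}|\nabla u-q|^2$ over constant vectors $q$). Splitting $\nabla u-\nabla v_r(x_0)=(\nabla u-\nabla v_r)+(\nabla v_r-\nabla v_r(x_0))$ and using the two displayed bounds, one gets for $\sigma<1$
\[
\phi(\sigma r)\leq C\sigma^{n+2}\phi(r)+Cr^n.
\]
Here I also need that $\int_{B_r^+(x_0)}|\nabla u|^2\,\d x$ is controlled by $\phi(r)+Cr^n$ up to the additive error, which follows since replacing $u$ by $u-A$ for a constant vector does not change $\nabla u$ and the energy of $v_r$ is bounded by that of $u$. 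A routine lemma (e.g. Lemma 2.1, Chapter III in Giaquinta, or the argument already used verbatim in the H\"older proof above with exponent $n-\gamma$ replaced by $n$ in the forcing term) then yields $\phi(r)\leq Cr^{n+2\alpha}$ for some $\alpha\in(0,1)$ — concretely any $\alpha<1$ works up to the constraint imposed by the $r^n$ forcing term, so one obtains $\phi(r)\le C r^{n+2\alpha}$ with $2\alpha$ slightly less than the gap between $n+2$ and $n$, i.e. any $\alpha<1$; the precise exponent depends only on $\lambda^\pm$, $p$ and $\|u\|_{L^\infty}$ through the constant in the $r^n$ bound. From the Cauchy property of $\{\nabla v_{2^{-k}r}(x_0)\}_k$ this also identifies $A(x_0)=\lim_{r\to0}\nabla v_r(x_0)$ and shows $|\nabla v_r(x_0)-A(x_0)|^2\le Cr^{2\alpha}$, so that \eqref{eq:campanato} holds with this $A(x_0)$. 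Proposition \ref{prop:campanato} then gives $u\in C^{1,\alpha}(\overline{B_{1/2}^+})$ with the stated bound.

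The main obstacle, and the only place where the half-ball geometry really enters, is making sure every harmonic estimate used near $\Pi$ is legitimate: the comparison function $v_r$ must vanish on $B_1\cap\Pi$ up to a linear part so that the odd-reflection trick in the Remark and in Lemma \ref{lem:sigmatheta} applies uniformly for $x_0$ close to $\Pi$ and for balls $B_r^+(x_0)$ that meet $\Pi$. Since $u=0$ on $B_1\cap\Pi$ by the definition of $\mathcal K$, the harmonic replacement $v_r$ inherits $v_r=0$ on the flat portion of $\partial B_r^+(x_0)\cap\Pi$, so the reflection is available with zero linear part and all constants are genuinely independent of $x_0$ and $r$. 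A secondary technical point is that the minimality of $u$ is only known to hold in $B_1^+$, so for $x_0\in B_{1/2}^+$ and $r<\tfrac14$ the ball $B_r^+(x_0)$ is compactly contained in $B_1^+$ up to the flat boundary, which is exactly what is needed to run the comparison; this is why we restrict to $B_{1/2}^+$ in the statement. With these points checked, the rest is the iteration above, which is entirely parallel to the H\"older proposition already proved.
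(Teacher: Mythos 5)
There is a genuine gap in your iteration. Your forcing term is too weak. Bounding $\int_{B_r^+(x_0)}\big(\lambda^+(u^+)^p+\lambda^-(u^-)^p\big)\,\d x$ using only the $L^\infty$ bound from Lemma~\ref{lem:linfty} gives $Cr^n$, and then the iteration inequality
\[
\phi(\sigma r)\le C\sigma^{n+2}\phi(r)+Cr^n
\]
only yields $\phi(r)\le Cr^n$, \emph{not} $\phi(r)\le Cr^{n+2\alpha}$ for any $\alpha>0$. The iteration lemma (Giaquinta Lemma~2.1, Chapter~III) has the form: if $\phi(\rho)\le A\left[(\rho/R)^a+\e\right]\phi(R)+BR^b$ with $b<a$, then $\phi(\rho)\le c\left[(\rho/R)^b\phi(R)+B\rho^b\right]$ --- the conclusion carries exponent $b$ (the forcing exponent), not $a$, and certainly not ``anything up to $a$.'' Indeed $\phi(r)=r^n$ is itself a solution of your recursion, so no iteration can squeeze out a better decay. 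With only $\phi(r)\le Cr^n$ one gets $\nabla u\in\mathrm{BMO}$ via Campanato, which does not imply $u\in C^{1,\alpha}$. Your statement that ``any $\alpha<1$ works up to the constraint imposed by the $r^n$ forcing term'' reverses the role of the two exponents in the iteration lemma.

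The paper's proof closes exactly this gap with a dichotomy you are missing. It iterates under the extra hypothesis $\inf_{B_{\sigma^j}^+(x_0)}|u|\le\sigma^j$. When this holds, the previously established H\"older regularity $u\in C^{0,\gamma}$ (with $\gamma<1$ close to $1$) gives $\sup_{B_{\sigma^j}^+(x_0)}|u|\le C\sigma^{j\gamma}$, so the forcing term improves to $\int_{B_{\sigma^j}^+}|u|^p\,\d x\le C\sigma^{j(n+\gamma p)}$, strictly better than $\sigma^{jn}$ and exactly what feeds a decay $\sigma^{j(n+2\alpha)}$ with $2\alpha<\gamma p$. When the hypothesis first fails at some $j=k$, then $|u|>\sigma^k$ on $B_{\sigma^k}^+(x_0)$, so $|\lap u|\le C\sigma^{k(p-1)}$ there, and direct Schauder ($C^{1,\alpha}$) estimates for the Poisson equation --- which scale favourably as long as $\alpha\le p$ --- finish the bound for radii $r\le\sigma^k$. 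Both branches are essential; neither follows from your single-track iteration with the $r^n$ forcing. The odd-reflection setup and the choice $A(x_0)=\lim_{r\to0}\nabla v_r(x_0)$ in your proposal are fine and match the paper, but the central analytic step is absent.
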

\begin{proof} We will  find  appropriate constants $\alpha$ and $C$ such that \eqref{eq:campanato} holds for all $r<\frac{1}{2}$. Then the result will follow from Proposition \ref{prop:campanato}.

The way we will do this is by proving that for some small $\alpha,\,\sigma$ and for all $x_0\in B_\frac12^+$ we can find a sequence $A_j$ such that
\begin{equation}\label{eq:jest1}
\int_{B_{\sigma^j}^+(x_0)}|\nabla u-A_j|^2\d x\leq C_1 \sigma^{j(n+2\alpha)},
\end{equation}
and
\begin{equation}\label{eq:jest2}
|A_j-A_{j-1}|\leq C_2\sigma^{j\alpha},
\end{equation}
for all $j$, as long as we have
\begin{equation}\label{eq:jest3}
\inf_{B_{\sigma^j}^+(x_0)}|u|\leq \sigma^j.
\end{equation}
Intuitively this will imply the desired inequality since if \eqref{eq:jest3} holds for all $j$ then we can pass to the limit in \eqref{eq:jest1} and we are done, if not, \eqref{eq:jest3} must fail for some $j$, but then $u$ does not vanish in the corresponding ball so that the equation for $u$ is non-singular there, and we can use estimates for the Poisson equation with bounded inhomogeneity.

For the sake of clarity we split the proof into three different steps.

\noindent {\bf Step 1: \eqref{eq:jest1} holds as long as \eqref{eq:jest3} holds.} The proof is by induction. Clearly, this is true for $j=1$ and some $A_1$ if we pick $C_1$ large enough. So assume this is true for $j=k$ and then we prove that it holds also for $j=k+1$. Take $v$ to be the harmonic replacement of $u$ in $B_{\sigma^k}^+(x_0)$. Then $v-A_k\cdot x$ is the replacement of $u-A_k\cdot x$. Hence, by the Dirichlet principle,
$$
\int_{B_{\sigma^k}^+(x_0)}|\nabla v-A_k|^2\d x\leq \int_{B_{\sigma^k}^+(x_0)}|\nabla u-A_k|^2\d x=I_1.
$$
Let $A_{k+1}=\nabla v(x_0)$. Lemma \ref{lem:sigmatheta} implies
$$
\int_{B_{\sigma^{k+1}}^+(x_0)}|\nabla v-A_{k+1}|^2\d x\leq C\sigma^{n+2}\int_{B_{\sigma^{k}}^+(x_0)}|\nabla v-A_{k}|^2\d x\leq C\sigma^{n+2} I_1.
$$
Since $u$ is a minimizer of \eqref{eq:minimizer}, we have
$$
\int_{B_{\sigma^k}^+(x_0)}|\nabla v|^2\d x\leq \int_{B_{\sigma^k}^+(x_0)}|\nabla u|^2\d x\leq \int_{B_{\sigma^k}^+(x_0)}|\nabla u|^2+\lambda_1(u^+)^p+\lambda_2 (u^-)^p\d x.
$$
Using that \eqref{eq:jest3} is assumed to hold up to $j=k$, the H\"older regularity of $u$ implies
\begin{align*}
I_2&=\int_{B_{\sigma^k}^+(x_0)}|\nabla u-\nabla v|^2\d x\leq \int_{B_{\sigma^k}^+(x_0)}\lambda_1(u^+)^p+\lambda_2(u^-)^p\d x\\&\leq \max(\lambda_i)\sigma^{kn}\sup_{B_{\sigma^k}^+(x_0)}|u|^p\leq C\max(\lambda_i)\sigma^{k(n+\beta p)}.
\end{align*}
Now pick $\beta$ so that $\beta p>2\alpha$. By Young's inequality
\begin{align*}
\int_{B_{\sigma^{k+1}}^+(x_0)}|\nabla u-A_{k+1}|^2\d x&\leq 2\int_{B_{\sigma^{k+1}}^+(x_0)}|\nabla v-A_{k+1}|^2\d x+\\
 &\qquad +2\int_{B_{\sigma^{k+1}}^+(x_0)}|\nabla u-\nabla v|^2\d x\\
&\leq 2 C \sigma^{n+2} I_1+2  C\sigma^{k(n+\beta p)}\\
&\leq 2 C_1 C\sigma^{n+2}\sigma^{k(n+2\alpha)}+2C\sigma^{k(n+\beta p)}\\
&\leq \sigma^{(k+1)(n+2\alpha)}\left(C_1C\sigma^{2-2\alpha}+2\frac{C}{C_1}\sigma^{\beta p k-n-2\alpha(k+1)}\right)\\
&\leq \sigma^{(k+1)(n+2\alpha)}\left(C_1+2C\sigma^{-n-2\alpha}\right)\\
&\leq C_1\sigma^{(k+1)(n+2\alpha)},
\end{align*}
if $C_1$ is chosen to be large enough and $\sigma$ small enough. This proves that \eqref{eq:jest1} 
  holds for $j=k+1$.\\

\noindent {\bf Step 2: \eqref{eq:jest2} holds as long as \eqref{eq:jest3} holds.}  We remark that  $A_{k+1}-A_k$ is the gradient of $v-A_k\cdot x$ at $x_0$, where $v$ is as in Step 1. Therefore, by the $C^1$-estimates in \eqref{eq:gradest} there holds
$$
|A_{k+1}-A_k|\leq C\sigma^{-kn/2}\left(\int_{B_{\sigma^k}^+(x_0)}|\nabla v-A_k|^2\d x\right)^\frac12\leq C\sqrt{C_1}\sigma^{\alpha k},
$$
from \eqref{eq:jest1} for $j=k$, which holds due to Step 1. Hence, if $C_2$ is large enough, $|A_{k+1}-A_k|\leq C_2\sigma^{\alpha (k+1)}$.\\

\noindent {\bf Step 3: Conclusion.} First of all, in the case when \eqref{eq:jest3} 
  holds for all $j$ then from \eqref{eq:jest2}
$$
|A_j-A_k|\leq \sum_{i=j}^{k-1}|A_i-A_{i+1}|\leq C'\sigma^{\alpha j},
$$
 Hence, the sequence $A_j$ converges to a limit $A(x_0)$. This together with \eqref{eq:jest1} implies \eqref{eq:campanato} immediately.

If \eqref{eq:jest3} holds for $j<k$ but fails for $j=k$ then
$$
\inf_{B^+_{\sigma^k}(x_0)}|u|>\sigma^k,
$$
so that from \eqref{eq:problem} we have
$$
|\lap u|\leq C(p,\lambda^\pm)\sigma^{k(p-1)}\textup{ in $B^+_{\sigma^k}(x_0)$.}
$$
Hence, $u-A_k\cdot x$ has $C^{1,\alpha}$-estimates in $B_{\sigma^k/2}^+(x_0)$. In particular from \eqref{eq:c1gamma} we have

\begin{align*}
|\nabla u(x_0)-A_k|&\leq C(p,\lambda^\pm)\sigma^{kp}+C\sigma^{-kn/2}\left(\int_{B^+_{\sigma^k}(x_0)}|\nabla u-A_k|^2\d x\right)^\frac12\\
&\leq C(p,\lambda^\pm)\sigma^{kp}+C\sqrt{C_1}\sigma^{k\alpha}\\
&\leq \sigma^{k\alpha}\left(C(p,\lambda^\pm)+C\sqrt{C_1}\right)\leq C\sigma^{k\alpha},
\end{align*}
if $p\geq\alpha$, and also from \eqref{eq:c1gamma} it follows that from $r\leq \sigma^k$
\begin{align*}
r^{-\alpha}\displaystyle\operatorname{osc}_{B^+_{r/2}(x_0)}|\nabla u-A_k|&\leq C(p,\lambda^\pm)r^{(p-\alpha)}+Cr^{-n/2-\alpha }\left(\int_{B^+_{r}(x_0)}|\nabla u-A_k|^2\d x\right)^\frac12\\
&\leq C(p,\lambda^\pm)r^{(p-\alpha)}+C\sqrt{C_1}r^{(\alpha-\alpha)}\\
&\leq  C,
\end{align*}
if again $p\geq\alpha$. With $A(x_0)=\nabla u(x_0)$ and $\sigma\leq 1/2$, integrating the last two estimates over $B_r^+(x_0)$ yields for any $r\leq \sigma^{k+1}$
$$
\int_{B^+_r(x_0)}|\nabla u-A(x_0)|^2\d x\leq Cr^{n+2\alpha}.
$$
For $r= \sigma^j$ for $j\leq k$ we have from Young's inequality and \eqref{eq:jest1}
\begin{align*}
\int_{B_r^+(x_0)}|\nabla u-A(x_0)|^2\ d x&\leq 2\int_{B_r^+(x_0)}|\nabla u-A_j|^2\d x+2\int_{B_r^+(x_0)}|A(x_0)-A_j|^2\d x\\
&\leq 2 C_1\sigma^{n+j 2\alpha}+2\sigma^n |A(x_0)-A_j|^2.
\end{align*}
From \eqref{eq:jest2} for $j\leq k$ it follows that
$$
|A(x_0)-A_j|\leq |A(x_0)-A_k+A_k-A_{k-1}+\cdots+A_{j+1}-A_j|\leq C\sigma^{j\alpha}.
$$
This yields the estimate, still with $r=\sigma^j$, for $j\leq k$
$$
\int_{B_r^+(x_0)}|\nabla u-A(x_0)|^2\ d x\leq 2 C_1\sigma^{n+j 2\alpha}+2C\sigma^{n+j 2\alpha},
$$
thus, we obtain the desired inequality for all $r$.
\end{proof}

\section{Optimal growth}
\renewcommand{\theequation}{4.\arabic{equation}}\setcounter{equation}{0}

In the proof the proposition below, we will use techniques similar to those in for instance \cite{AMM} and \cite{CKS} to prove that $u$ will have the optimal growth of order $\beta=2/(2-p)$ at branching points.

\begin{proposition}\label{prop:growth}
 (Optimal growth) Suppose  $u\in P_1(M)$, $x_0\in \Gamma\cap \Pi$ and $|\nabla u(x_0)|=0$.
 Then there exists a constant $C=C(M)$ such that with $\beta=\frac{2}{2-p}$
$$
\sup_{B_r^+(x_0)}|u|\leq Cr^\beta,\quad \text{for all }\,\, 0<r<\frac{1}{2}.
$$
\end{proposition}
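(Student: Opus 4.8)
The plan is to argue by contradiction using a blow-up/rescaling argument, in the spirit of the optimal growth proofs in \cite{AMM} and \cite{CKS}. Suppose the conclusion fails. Then for every $j\in\N$ there is a minimizer $u_j\in P_1(M)$, a branching point $x_j\in\Gamma\cap\Pi$, and a radius $r_j\in(0,\tfrac12)$ such that
\[
S_j:=\sup_{B_{r_j}^+(x_j)}|u_j| > j\, r_j^\beta .
\]
First I would reduce to $x_j=0$ by translating along $\Pi$ (which is harmless since the fixed boundary is flat and contains the origin). Then I would introduce the quantity
\[
\theta_j(r)=\frac{1}{r^\beta}\sup_{B_r^+}|u_j|, \qquad 0<r<\tfrac12,
\]
and, using the doubling-type/monotonicity device of Lemma 5.1 in \cite{CKS} (or the argument in \cite{AMM}), pick a "worst scale" $\rho_j\le r_j$ such that $\theta_j(\rho_j)\to\infty$ while $\sup_{\rho_j\le s\le 1/2}\theta_j(s)\le C\,\theta_j(\rho_j)$; concretely one selects $\rho_j$ maximizing $\theta_j(s)2^{-Ns}$-type weights, or simply takes the largest $\rho$ with $\theta_j(\rho)=\sup_{\rho\le s}\theta_j(s)$. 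The point of this step is to gain, after rescaling, an \emph{a priori} growth bound that the naive rescaling by $r_j$ does not provide.

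Next I would set
\[
v_j(x)=\frac{u_j(\rho_j x)}{\rho_j^\beta\,\theta_j(\rho_j)},
\]
so that $\sup_{B_1^+}|v_j|=1$, $v_j(0)=0$, $\nabla v_j(0)=0$, $v_j=0$ on $\Pi$, and by the choice of $\rho_j$ one has the growth control $\sup_{B_R^+}|v_j|\le CR^\beta$ for $1\le R\le 1/(2\rho_j)$. The function $v_j$ is a minimizer of a functional of the same type but with the coefficients $\lambda^\pm$ replaced by $\lambda^\pm/\theta_j(\rho_j)^{2-p}=\lambda^\pm\,\theta_j(\rho_j)^{-(2-p)}\to 0$ (this uses $\beta(2-p)=2$, so the rescaling is exactly the one built into the definition of $u_{x_0,r}$). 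By the uniform $C^{1,\alpha}$-estimate up to $\Pi$ of Proposition \ref{prop:c1a}, applied on a fixed ball and combined with the growth bound on dyadic annuli, the $v_j$ are locally uniformly bounded in $C^{1,\alpha}$ on $\R^n\cap\{x_1>0\}$; hence along a subsequence $v_j\to v_0$ in $C^1_{\loc}$, where $v_0$ is \emph{harmonic} in $\R^n\cap\{x_1>0\}$ (the nonlinear term disappears since its coefficient tends to $0$), vanishes on $\Pi$, satisfies $v_0(0)=0$, $\nabla v_0(0)=0$, the growth bound $\sup_{B_R^+}|v_0|\le CR^\beta$ with $\beta<2$, and — this is the crux — $\sup_{B_1^+}|v_0|=1$, so $v_0\not\equiv 0$. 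Here I must be a little careful that the sup is not lost in the limit; this follows because the $C^{1,\alpha}$ convergence is uniform on $\overline{B_1^+}$, so $\sup_{B_1^+}|v_j|=1$ passes to the limit.

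Finally I would derive the contradiction from a Liouville-type statement: odd-reflecting $v_0$ across $\Pi$ yields an entire harmonic function on $\R^n$ with $|v_0(x)|\le C(1+|x|)^\beta$ and $\beta<2$, hence by the standard Liouville theorem $v_0$ is an affine function; since $v_0$ vanishes on $\Pi$ it must be $v_0(x)=a\,x_1$ for some constant $a$, and $\nabla v_0(0)=0$ forces $a=0$, i.e. $v_0\equiv 0$, contradicting $\sup_{B_1^+}|v_0|=1$. I expect the main obstacle to be the selection of the correct "worst radius" $\rho_j$ and verifying the resulting uniform growth estimate $\sup_{B_R^+}|v_j|\le CR^\beta$ for all $R$ up to $1/(2\rho_j)$ — this is what makes the limit $v_0$ have controlled growth and is exactly where the doubling/monotonicity argument of \cite{CKS,AMM} is needed; once that is in place, the compactness and Liouville steps are routine given Proposition \ref{prop:c1a} and the oddness across the flat fixed boundary.
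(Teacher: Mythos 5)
Your proposal is correct and follows essentially the same route as the paper's proof: a contradiction argument where one selects a worst dyadic radius to get a uniform growth bound after rescaling, then passes to a harmonic blow-up limit via the $C^{1,\alpha}$-estimates of Proposition~\ref{prop:c1a}, reflects oddly across $\Pi$, and invokes the Liouville-type rigidity for subquadratic ($\beta<2$) growth together with $v_0(0)=|\nabla v_0(0)|=0$ to force $v_0\equiv 0$, contradicting $\sup_{B_1^+}|v_0|=1$. The only cosmetic difference is that the paper derives the Liouville conclusion directly from the interior second-derivative estimate $\sup_{B_{2^k}}|D^2 w_0|\le C\,2^{k(\beta-2)}\to 0$, rather than citing the Liouville theorem as a black box.
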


\begin{proof} The proof is by contradiction. Without loss of generality, assume $x_0=0$ and define $$S_r(u)=\sup_{ B_r^+} u,$$  for $ 0<r<\frac{1}{2}$. We will show that either $S_r\leq C r^\beta$ for a constant $C$ or there exists a $k\in \N$ with $2^kr\leq 1$ such that $S_r\leq 2^{-k\beta}S_{2^kr}$. Suppose both these assertions  fail, then one can find sequences $r_j\to 0$, $u_j\in P_1(M)$ such that with $S_j:=S_{r_j}$ there holds
\beas\label{eq:non1}
S_j > C_jr_j^\beta,
\eeas
where $C_j \rightarrow \infty$  and
\beas\label{eq:non2}
 S_j > 2^{-k\beta}S_{r_j2^k},\quad \text{for all  } k\in\N,\text{ and } 2^{k}r_j\leq 1.
\eeas
Define \beas w_j(x)=\frac{u_j(r_j x)}{S_j}.\eeas Then:
\begin{itemize}
  \item [(a)] $\sup_{B_1^+}|w_j(x)|=1,$
  \item[(b)]  $\sup_{B_{2^k}^+}|w_j(x)|\leq 2^{k\beta},$
\item[(c)] $w_j(0)=|\nabla w_j(0)|=0,$
\item [(d)] $w_j=0$ on $B_\frac{1}{r_j}\cap \Pi$,
\item[(e)] $w_j$ is a minimizer of
\beas
\int_{B_{2^j}^+} \bigg(\frac{|\nabla v|^2}{2}+T_j\big( \lambda^+(v^+)^{p}+2\lambda^-(v^-)^{p}\big)\bigg),
\eeas
where $T_j=\frac{r_j^{-2}}{S_j^{2-p}}\to 0$ as $j\rightarrow \infty.$
\end{itemize}
By using Proposition \ref{prop:c1a}, we can find a subsequence of $w_j$ which converges to a limiting function $w_0$ in $C^1(\overline{B_R^+})$ for all $R>0$. Due to (a)-(e), $w$ satisfies
\begin{enumerate}
 \item $\sup_{B_1^+}|w_0(x)|=1$,
\item $\sup_{B^+_{2^k}}|w_0(x)|\leq 2^{k\beta}$ for all $k$,
\item $w_0(0)=|\nabla w(0)|=0$,
\item $w_0=0$ on $\Pi$,
\item $\Delta w_0=0$ in $(\R^n)^+$.
\end{enumerate}
We  reflect the function $w_0$ in an odd manner with respect to $\Pi$ to get a harmonic function in the whole $\R^n$.
 By interior estimates for harmonic functions and (2), for every $k\geq 1$ we have
\beas
\sup_{B_{2^k}}|D^2w_0(z)|\leq \frac{C}{2^{k(n+2)}}\|w_0\|_{L^1(B_{2^k})}\leq C 2^{k(\beta -2)}.
\eeas
Since $\beta < 2$, passing $k\to \infty$ implies $D^ 2 w_0 =0$ and consequently
$w_0$ is a linear function. Then (3) implies $w_0=0$, contradicting (1).
\end{proof}
\section{Technical tools}
Here we present some technical lemmas which we will use later to prove our main result.
\subsection{Non-degeneracy}
The next lemma shows that blow-ups cannot vanish identically. This property is usually referred to as non-degeneracy and to prove it,
 we use the idea in \cite{LP} which in turn is an adaptation of a similar proof given in \cite{CR}.
\begin{lemma}\label{lem:degen}
(Non-degeneracy) Suppose that $u$ is a minimizer of \eqref{eq:minimizer} and $x_0\in\Gamma^ + \cap \Pi$.
Then for some constant $c^ +=c^+(\lambda^+)$
\be\label{eq:nonn1}
\sup_{\pa B_r^+(x_0)\cap \O^+}u\geq c^+r^\beta,\quad 0<r<\frac{1}{2}.
\ee
Similarly if $x_0 \in\Gamma^-\cap \Pi  $, then there exists a constant $c^ -=c^-(\lambda^-)$
\be\label{eq:nonn2}
\inf_{\pa B^ +_r(x_0)\cap \O^-}u\leq -c^-r^\beta, \quad 0<r<\frac{1}{2}.
\ee
\end{lemma}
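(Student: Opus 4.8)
The plan is to prove the non-degeneracy estimate \eqref{eq:nonn1} by a comparison argument against an explicit subsolution, following the scheme of \cite{LP} and \cite{CR}. I would fix $x_0\in\Gamma^+\cap\Pi$ and, without loss of generality, take $x_0=0$. Since $0\in\Gamma^+$, there are points of $\Omega^+$ arbitrarily close to the origin; by continuity of $u$ it is enough to establish the bound at such a nearby point and then let it tend to $0$. So I would pick $y\in\Omega^+$ close to $0$, set $d=u(y)>0$, and aim to show that $\sup_{\partial B_r^+(0)\cap\Omega^+}u$ grows at least like $c^+r^\beta$.

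The core of the argument is the following comparison function. On the connected component $D$ of $\Omega^+\cap B_r^+(0)$ containing $y$, $u$ solves $\Delta u=p\lambda^+ u^{p-1}$, which is the Euler--Lagrange equation of the one-phase functional. I would build a radial (or suitably scaled) subsolution of the form $v(x)=c|x-y|^\beta$ — or more precisely $v(x)=u(y)+c(|x-y|^\beta-\text{lower order})$ adapted so that $\Delta v\geq p\lambda^+ (v^+)^{p-1}$ wherever $v>0$, using $\beta=2/(2-p)$ and choosing $c=c(\lambda^+)$ small enough; this is exactly the exponent for which $|x|^\beta$ is a natural barrier since $\Delta(|x|^\beta)=\beta(\beta+n-2)|x|^{\beta-2}$ and $(|x|^\beta)^{p-1}=|x|^{\beta(p-1)}=|x|^{\beta-2}$. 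Then $u-v$ is superharmonic-type on $D$ (more carefully, one compares via the minimality of $u$ or via the maximum principle on $D$, noting $v\le u$ where it matters on the part of $\partial D$ lying on $\Pi$ since there both vanish, and on $\partial B_r\cap \partial D$). The maximum principle forces $u-v$ to attain its minimum on $\partial D$; since $u-v=0$ at $y$ and $u-v\ge 0$ on the part of $\partial D$ inside $B_r$ (either on $\Pi$, where both vanish and the linear correction is controlled, or on $\Gamma^+$, where $u=0$ and $v\le 0$ is arranged), the minimum must be attained on $\partial B_r^+(0)$, giving $\sup_{\partial B_r^+(0)\cap\Omega^+}u\ge \sup_{\partial B_r^+(0)}v \ge c^+r^\beta$ after letting $y\to 0$. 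The estimate \eqref{eq:nonn2} follows by applying \eqref{eq:nonn1} to $-u$, which is a minimizer of the same type of functional with $\lambda^+$ and $\lambda^-$ interchanged.

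The main obstacle I anticipate is the interaction with the fixed boundary $\Pi$: the component $D$ may touch $\Pi$, and one must ensure the comparison function $v$ does not exceed $u$ on $\partial D\cap\Pi$. Since $u=0$ on $\Pi$, one needs the barrier $v$ to be $\le 0$ on $\Pi$ near the relevant region, which may require centering the barrier not at $y$ but translating it slightly into $B_1^+$, or subtracting an appropriate linear (in $x_1$) term — and then re-checking that it is still a subsolution. A clean way around this is to first reflect oddly across $\Pi$ (as done elsewhere in the paper) and work in the full ball, or alternatively to use the minimality of $u$ directly: replace $u$ by $\max(u,v)$ on $D$ and compare energies, which automatically handles the boundary behavior on $\Pi$ because the competitor still vanishes there. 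I would pursue the energy-comparison version since it sidesteps delicate pointwise boundary estimates and is the approach closest to \cite{LP}; the routine part is then just checking that $v$ is an admissible competitor and that the energy of $\max(u,v)$ being no smaller than that of $u$ forces the stated growth of $\sup u$ on $\partial B_r^+$.
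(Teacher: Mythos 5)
Your overall scheme — pick $y\in\Omega^+$ near $x_0$, run a maximum-principle argument on the component $D$ of $\Omega^+\cap B_r^+$ containing $y$, send $y\to x_0$, and get \eqref{eq:nonn2} by symmetry — matches the paper. But the central step has a genuine gap, and the mechanism you propose is different from (and weaker than) the one the paper actually uses.

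You want to compare $u$ directly against a power barrier $v=c|x-y|^\beta$ and assert that ``$u-v$ is superharmonic-type.'' For the equation $\Delta u = p\lambda^+ u^{p-1}$ with $0<p<1$, the nonlinearity $t\mapsto t^{p-1}$ is \emph{decreasing}, so $\Delta(u-v) = p\lambda^+(u^{p-1}-v^{p-1})$ has a sign only \emph{after} you already know how $u$ and $v$ compare; a comparison principle between sub- and supersolutions of this semilinear problem does not hold in the standard form. (This is exactly why the free boundary problem has nontrivial structure — solutions of the boundary value problem are not unique.) So ``the maximum principle forces $u-v$ to attain its minimum on $\partial D$'' is not justified; it presupposes the conclusion. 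The paper, following \cite{CR} and \cite{LP}, avoids any comparison principle by working with the transformed quantity
$$
w(x) = |u(x)|^{2/\beta} - c\,|x-y|^2 \;=\; u(x)^{2-p} - c\,|x-y|^2,
$$
for $u\geq 0$ in $D$. A direct computation, using the Euler--Lagrange equation in $\{u>0\}$, gives
$$
\Delta w \;=\; (2-p)\,p\,\lambda^+ \;+\;(2-p)(1-p)\,u^{-p}|\nabla u|^2 \;-\; 2nc \;\geq\; 0
$$
once $c$ is chosen small enough (the paper writes this as $\frac{2p}{\beta}+\frac{2}{\beta}(\frac{2}{\beta}-1)|\nabla u|^2/|u|^p - 4c$ with $n=2$). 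Thus $w$ is unconditionally subharmonic on $D$, with $w(y)>0$, $w\leq 0$ on $\Gamma^+$ (where $u=0$) and $w\leq 0$ on $\Pi$ (also $u=0$). The maximum principle then forces the maximum of $w$ onto $\partial B_r^+(y)\cap\Omega^+$, giving $\sup u^{2-p} \geq c r^2$, which is exactly $\sup u \geq c^{\beta/2} r^\beta$. The point is that after the power change of variables the barrier is \emph{quadratic}, not $|x|^\beta$, and the subharmonicity is verified by a one-line calculation rather than by comparison.

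Your identification of the fixed-boundary issue is correct, but it is in fact handled automatically in this formulation: on $\Pi$ one has $u=0$, hence $w = -c|x-y|^2 \leq 0$, so the piece of $\partial D$ lying on $\Pi$ is harmless. Your suggested fallback of comparing energies of $u$ and $\max(u,v)$ is a different approach; it is not what the paper does, and as written it is not developed far enough to evaluate (you would have to quantify how the Dirichlet term changes against the $(u^+)^p$ term, and also check that $\max(u,v)$ is admissible, i.e., still vanishes on $\Pi$). If you want to keep your barrier/maximum-principle framing, replace the direct comparison of $u$ with $c|x|^\beta$ by the subharmonicity of $u^{2-p}-c|x-y|^2$; otherwise you are using a comparison principle that the equation does not provide.
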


\begin{proof} We prove only (\ref{eq:nonn1}). The inequality (\ref{eq:nonn2}) can derived analogously. Suppose that, $y\in \Omega^+$, $B_r^+(y)\subset B_1^+$ and $u$ is a minimizer of  (\ref{eq:minimizer}). Define the function
\beas
w(x)=|u(x)|^{\frac{2}{\beta}}-c|x-y|^2,
\eeas
where $c$ is a constant which we will determine later. By a simple computation we find
\beas
\Delta w=\frac{2p}{\beta}+\frac{2}{\beta}(\frac{2}{\beta}-1)\frac{|\nabla u|^2}{|u|^p}-4c, ,\quad \text{in } \Omega^+\cap B_r^+(y).
\eeas
If we choose $c=\frac{p}{2\beta}$ then $\Delta w\geq 0$ in $B_r^+(y)\cap \Omega^ +$ and by the maximum principle, the maximum of $w$ occurs on
$\pa (B_r^+(y)\cap \Omega^ +)$.
We know that
\beas
\begin{cases}
 w(y)\geq 0,\\
\Delta w \geq \text{in } B_r^+(y)\cap \Omega^ +,\\
w\leq 0\,\,\text{on } \pa \Omega^+,\\
w\leq 0\,\,\text{on } B_r^+(y)\cap \Pi,\\
\end{cases}
\eeas
and consequently $w$ attains its maximum on $\pa B_r^+(y)$ and $$\sup_{ \pa B_r^+(y)\cap \Omega^+} w >0.$$ In other words,
\be\label{eq:for y}
\sup_{ \pa B_r^+(y)\cap \Omega^+} u^{\frac{2}{\beta}}>cr^ 2.
\ee
Now let $x_0\in \Gamma^+\cap \Pi$. Then one can find a sequence ${y_j}$ in $\Omega^+$ such that $y_j\rightarrow x_0$.
Then by considering  (\ref{eq:for y}) for $y_j$ and passing to the limit, one obtains
\beas
\sup_{ \pa B_r^+(x_0)\cap \Omega^+} u^{\frac{2}{\beta}}\geq cr^ 2,
\eeas
or equivalently,
\beas
\sup_{\pa B_r^+(x_0)\cap \O^+}u\geq c^+r^\beta. 
\eeas
\end{proof}

One important consequence of Lemma \ref{lem:degen} is that the free boundary is stable in the sense that limits of free boundary points are are always free boundary points. In particular, it implies that if $u_j$ is a sequence of minimizers converging to $u_0$ and $x_j\in \Gamma^\pm(u_j)$ with $x_j\to x_0$, then $x_0\in \Gamma^\pm(u_0)$.

\subsection{Monotonicity formula}
The next lemma is a crucial monotonicity formula due to Weiss, proved in \cite{Wei04}. See Theorem 3.1 in  \cite{W}, where the monotonicity formula was introduced in the interior setting.

\begin{lemma}\label{lem:Weiss}(Weiss's monotonicity formula) Suppose that $u\in P_R(M)$,$0<r<R$  and $G(u)=2\lambda^+(u^+)^{p}+2\lambda^-(u^-)^{p} $. Let
\beas
W(r,x_0,u) = r^{-2\beta}\int_{ B^+_r(x_0)}\big(|\nabla u|^2+2G(u)\big)dx-\frac{\beta}{r^{1+2\beta}}\int_{\pa B^+_r(x_0)}u^2(x)\,ds,
\eeas
for $r>0$. Then $W$ is monotonically increasing with respect to $r$ if $r<d(\pa B_R^+,x_0)$. Moreover, $W$ is constant if and only if $u$ is a homogeneous function of degree $\beta$.

\end{lemma}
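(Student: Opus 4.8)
The plan is to compute $\frac{d}{dr}W(r,x_0,u)$ directly and show it equals a nonnegative quantity, namely (up to a positive factor) the integral over $\partial B_r^+(x_0)$ of the square of $\partial_\nu u - \frac{\beta}{r}u$. Without loss of generality take $x_0=0$ and write $W(r)$ for $W(r,0,u)$. The first step is a scaling normalization: introduce $u_r(x)=u(rx)/r^\beta$, so that $u_r$ is again a minimizer on the rescaled domain (this is exactly the homogeneity built into the exponent $\beta=2/(2-p)$, which makes the gradient term, the potential term $G$, and the boundary term all scale the same way). Then one checks that $W(r,0,u)=\int_{B_1^+}\bigl(|\nabla u_r|^2+2G(u_r)\bigr)\,dx-\beta\int_{\partial B_1^+}u_r^2\,ds$, i.e.\ $W(r)=W(1,0,u_r)$. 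Differentiating in $r$ is then the same as differentiating in the scaling parameter, and $\frac{d}{dr}u_r = \frac{1}{r}\bigl(x\cdot\nabla u_r - \beta u_r\bigr)$, which on the sphere $\partial B_1^+$ is $\frac{1}{r}\bigl(\partial_\nu u_r-\beta u_r\bigr)$.

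The second step is to carry out the differentiation. Write $e(r)=\int_{B_1^+}\bigl(|\nabla u_r|^2+2G(u_r)\bigr)\,dx$ and $b(r)=\beta\int_{\partial B_1^+}u_r^2\,ds$. For $e(r)$, differentiate under the integral and use that $u_r$ is a minimizer (hence weakly solves the Euler--Lagrange equation $\Delta u_r = \frac12 G'(u_r)$ on $\{u_r\ne 0\}$; the set $\{u_r=0\}$ contributes nothing because $\nabla u_r=0$ a.e.\ there and $G(0)=0$): integrating by parts moves the Laplacian onto the test function $\partial_r u_r$, the interior terms cancel against $G'(u_r)\partial_r u_r$, and one is left with the boundary term $2\int_{\partial B_1^+}\partial_\nu u_r\,\partial_r u_r\,ds$ over the spherical part, plus a term on the flat part $B_1\cap\Pi$ which vanishes since $u_r=0$ there forces $\partial_r u_r=0$ there as well. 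For $b(r)$, differentiating gives $\beta\cdot 2\int_{\partial B_1^+}u_r\,\partial_r u_r\,ds$. Subtracting and inserting $\partial_r u_r=\frac1r(\partial_\nu u_r-\beta u_r)$ on the sphere yields
\be
W'(r)=\frac{2}{r}\int_{\partial B_1^+}\bigl(\partial_\nu u_r-\beta u_r\bigr)^2\,ds\geq 0,
\ee
after rewriting the cross terms; in the original variables this is $W'(r)=\frac{2}{r}\int_{\partial B_r^+}\bigl(\partial_\nu u-\frac{\beta}{r}u\bigr)^2\,ds$, up to a dimensional power of $r$. The equality case is immediate from this identity: $W'\equiv 0$ iff $\partial_\nu u=\frac{\beta}{r}u$ on every sphere, which is exactly the statement that $u$ is positively homogeneous of degree $\beta$ (integrate the radial ODE $\partial_\rho(u(\rho\omega)\rho^{-\beta})=0$).

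The main obstacle is making the integration by parts and the differentiation under the integral sign rigorous given the limited regularity: $u$ is only $C^{1,\alpha}$ up to the fixed boundary (Proposition \ref{prop:c1a}) and the Euler--Lagrange equation is singular on $\Gamma$, since $G'(u)=p(\lambda^+(u^+)^{p-1}-\lambda^-(u^-)^{p-1})$ blows up there. One must argue that $\partial_r u_r$ is an admissible competitor in the weak formulation and that the boundary integrals over $\partial B_r^+$ make sense; the standard fix is to differentiate the \emph{energy of the minimizer against its own rescalings} using the minimality inequality $E(u_r)\le E(v)$ for competitors $v$ agreeing with $u_r$ on the boundary, together with the first-variation/domain-variation (Rellich--Pohozaev) identity, which for minimizers holds without needing pointwise solvability on $\Gamma$ because $G\ge 0$, $G(0)=0$, and $|\nabla u|=0$ a.e.\ on $\{u=0\}$. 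An approximation argument (replacing $B_r^+$ by $B_r^+\setminus\{|u|<\delta\}$, or mollifying) handles the singular set, with $|\Gamma|=0$ ensuring the error terms vanish as $\delta\to 0$. The flat-boundary piece $B_1\cap\Pi$ requires a little extra care but is handled exactly as in the interior case by the odd-reflection trick used repeatedly in Section 3, since $u$ vanishes there.
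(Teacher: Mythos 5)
The paper does not prove this lemma; it cites Weiss (\cite{Wei04}, and Theorem~3.1 in \cite{W} for the interior version), so there is no in-paper proof to compare against. Your plan is the standard Weiss derivation, and your observation that the flat piece $B_1\cap\Pi$ contributes nothing to the boundary term after integration by parts --- because $u(rx)\equiv 0$ for $x\in\Pi$ forces $\partial_r u_r\equiv 0$ there --- is precisely the extra ingredient needed in the half-ball setting. Your awareness of the regularity obstruction (the Euler--Lagrange equation degenerates on $\Gamma$) and the proposed fix via inner variations/approximation together with $|\Gamma|=0$ are also the right concerns.

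There is, however, a concrete issue that your sketch glosses over by asserting ``the interior terms cancel'' without writing the computation. After rescaling, $W(r)=\int_{B_1^+}(|\nabla u_r|^2+2G(u_r))\,dx-\beta\int_{\partial B_1^+}u_r^2\,ds$, and differentiating and integrating by parts produces the interior integrand $(-2\Delta u_r+2G'(u_r))\,\partial_r u_r$. Since $E(u)=\int(|\nabla u|^2+G(u))$, the Euler--Lagrange equation is $2\Delta u=G'(u)$ (equivalently $\Delta u=\tfrac12 G'(u)$, as you wrote), so $-2\Delta u_r+2G'(u_r)=G'(u_r)\ne 0$ on $\{u_r\ne 0\}$ and the cancellation does \emph{not} occur. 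For it to occur, the coefficient of $G$ inside $W$ must match the one in $E$: $W$ should contain $|\nabla u_r|^2+G(u_r)$, not $|\nabla u_r|^2+2G(u_r)$. This looks like a typo in the lemma's statement (the printed $W$ would be the right normalization for an energy $\int(\tfrac12|\nabla u|^2+G(u))$), but your write-up inherits it and should flag it rather than assert the cancellation. One smaller remark: the odd-reflection trick you invoke for handling $\Pi$ is not actually available here, since $G$ is not odd unless $\lambda^+=\lambda^-$; fortunately it is also not needed, because the vanishing of $\partial_r u_r$ on $\Pi$ already eliminates the flat boundary term.
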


\subsection{Global minimizers}
The next theorem classifies the homogeneous global minimizers of (\ref{eq:minimizer}) in two dimensions. This result is basically a result from \cite{LP}. From this we can then classify all global minimizers. From now on we will be working only in two dimensions.

\begin{theorem}\label{thm:homo}
   Let $u\in P_\infty(M)$ be homogeneous and assume the dimension to be two. Then for some suitable constants $c^\pm$ one of the following holds:
\begin{enumerate}
 \item $u(x)=c^+(x_1^+)^\beta$, for one phase non-negative points,
\item $u(x)=-c^-(x_1^-)^\beta$, for one phase non-positive points.
\end{enumerate}
\end{theorem}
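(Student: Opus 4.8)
The plan is to classify homogeneous global minimizers $u \in P_\infty(M)$ in the half-plane $\{x_1 > 0\}$ that degree-$\beta$ homogeneous, starting from the observation that homogeneity of degree $\beta$ together with the $C^{1,\alpha}$-regularity from Proposition \ref{prop:c1a} severely restricts the structure. First I would set up polar coordinates $x = \rho(\cos\theta, \sin\theta)$ with $\theta \in (0,\pi)$ (since we work in the half-plane and $u = 0$ on $\Pi = \{x_1 = 0\}$, i.e.\ on $\theta = 0$ and $\theta = \pi$), and write $u(\rho, \theta) = \rho^\beta g(\theta)$ for a $C^1$ function $g$ on $[0,\pi]$ with $g(0) = g(\pi) = 0$. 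The key point is that on the open set $\{u \neq 0\}$ the Euler--Lagrange equation \eqref{eq:problem} holds, and plugging in the homogeneous ansatz gives an ODE for $g$ on each interval where $g$ has a fixed sign: $\beta(\beta-1) g + g'' + \beta g = \pm p\lambda^\pm |g|^{p-1}\,\mathrm{sgn}(g)$, i.e.\ $g'' = \beta(2-\beta)\,(\text{sign})\,g \cdot(\dots)$—more precisely, using $\beta^2 = \beta\cdot\frac{2}{2-p}$ one checks the zeroth-order linear terms combine to the ``right'' coefficient so that the equation becomes $g'' + \beta^2 g = p\lambda^+ g^{p-1}$ on $\{g>0\}$ and similarly on $\{g < 0\}$.

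Next I would argue that $g$ cannot change sign. This is where I expect to use non-degeneracy (Lemma \ref{lem:degen}) together with a Weiss-type / ACF-type argument or, following the cited reference \cite{LP}, an energy comparison: if $g$ were positive on some arc $(\theta_1, \theta_2)$ and negative on an adjacent arc, the two nodal arcs would each have to support a solution of the respective ODE vanishing at both endpoints, and a computation of the eigenvalue/length constraint forces the total available angular aperture to exceed $\pi$, a contradiction. Concretely, on an interval where $g > 0$ and $g$ vanishes at both ends, multiplying the ODE by $g'$ and integrating (a Pohozaev/energy identity for the ODE) pins down the length of that interval in terms of $\beta$ and $\lambda^+$, and one shows this length is strictly greater than $\pi/(\text{something}) \geq \pi$ unless there is only one arc; I would lean on the explicit analysis already carried out in \cite{LP} here rather than redo it. The upshot is that either $g \geq 0$ throughout or $g \leq 0$ throughout on $(0,\pi)$.

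Assume then $g \geq 0$, so $u \geq 0$ and $0$ is a one-phase non-negative point. Now on $\{g > 0\}$ we have the single ODE $g'' + \beta^2 g = p\lambda^+ g^{p-1}$ with $g > 0$, and I claim the only $C^1$ solution on $(0,\pi)$ vanishing at both endpoints with the correct growth/boundedness is $g(\theta) = c^+ (\sin\theta)^\beta$ up to the observation that actually one must have, after reflecting or using the precise half-plane geometry, $g(\theta) = c^+(x_1^+/\rho)^\beta = c^+ \sin^\beta\theta$. Wait—the claimed answer in the statement is $u = c^+(x_1^+)^\beta$, i.e.\ $g(\theta) = c^+\sin^\beta\theta$; one verifies directly that $v = c^+ x_1^\beta$ solves $\Delta v = p\lambda^+ v^{p-1}$ in $\{x_1>0\}$ precisely when $c^+$ is the constant making $\beta(\beta-1)(c^+)^{p-1} = p\lambda^+$, which has a unique positive solution, and that this is the unique homogeneous minimizer: uniqueness follows because any other candidate would, by the ODE analysis, have its angular profile supported on a strict subarc and fail either the boundary condition on all of $\Pi$ or minimality (an energy comparison against the explicit solution, using Lemma \ref{lem:Weiss} that $W$ is constant on homogeneous minimizers and computing $W$ for each candidate, rules out the others). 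The symmetric argument with $\lambda^-$ handles the $g \leq 0$ case and yields $u = -c^-(x_1^-)^\beta$.

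The main obstacle, I expect, is the no-sign-change step: ruling out two-phase homogeneous global solutions in the half-plane. In the interior setting this is exactly the content of the ACF monotonicity formula applied to $u^+$ and $u^-$, and in \cite{LP} it is done carefully in 2D; here the half-plane boundary condition $u = 0$ on $\Pi$ actually helps (it removes angular room), but one still has to either invoke an ACF-type inequality adapted to the half-disk or carry out the explicit ODE-phase-plane analysis on $(0,\pi)$ showing that two sign-definite arcs cannot coexist. I would structure the proof to cite \cite{LP} for this dichotomy and then do the (short) explicit ODE solving and the uniqueness-of-constant verification directly, since those are elementary once the sign is fixed.
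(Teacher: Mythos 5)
Your approach tracks the paper's own proof closely. The paper's argument is brief: by homogeneity each connected component of $\O^\pm$ is a cone, and Lemma 4.2 of \cite{LP} gives that its angular opening lies in $(\pi/\beta,\pi)$; since $\beta\in(1,2)$, $\pi/\beta>\pi/2$, so at most one such arc fits into the half-plane's total angular range $\pi$; the second part of the same lemma then forces the opening to be exactly $\pi$, which, after using $u=0$ on $\Pi$ to fix the rotation, yields $u=c^\pm(x_1^\pm)^\beta$. Your polar ansatz $u=\rho^\beta g(\theta)$, the ODE $g''+\beta^2 g=p\lambda^\pm|g|^{p-1}\sgn g$, and the verification that $g=c^+\sin^\beta\theta$ solves it with $(c^+)^{2-p}\beta(\beta-1)=p\lambda^+$ are precisely the computations underlying that lemma, so your proposal is essentially the same argument unpacked, not a different route. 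One small inaccuracy worth flagging: a profile supported on a strict subarc $(\theta_1,\theta_2)\subsetneq(0,\pi)$ would \emph{still} satisfy $u=0$ on $\Pi$, so your suggestion that such a candidate could ``fail the boundary condition'' does not apply; ruling out strict subarcs genuinely requires the opening/minimality constraint from \cite{LP} (the paper's ``second part of Lemma 4.2''), which you should cite directly rather than offer a boundary-condition alternative.
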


\begin{proof} Let $0\in \Gamma^+\cap \Pi$. Assume first that $u_0$ be a homogeneous global minimizer of \eqref{eq:minimizer}. From the homogeneity assumption, we conclude that any connected component of $\O^+$ is a cone. Lemma 4.2 in \cite{LP} asserts that it has opening  $\gamma \in (\pi/\beta,\pi)$, for $\beta=2/(2-p)$. Since $\beta\in (1,2)$, there can only be one component. Applying the second part of Lemma 4.2, we obtain $\gamma=\pi$, which up to rotations corresponds to $u_0(x)=c^+(x_1^+)^\beta$. Since $u_0$ must vanish on $\Pi$, no other rotation except the identity is possible. The case $0\in \Gamma^-\cap \Pi$ can be handled similarly.
\end{proof}

The theorem above implies in particular that there can be no two-phase points touching the fixed boundary.

\begin{corollary}\label{cor:onephase} Suppose $u\in P_1(M)$. Then the origin is a one-phase point.

\end{corollary}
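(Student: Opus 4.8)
<br>

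The plan is to argue by contradiction using the blow-up procedure together with the classification of homogeneous global minimizers in Theorem~\ref{thm:homo}. Suppose $u\in P_1(M)$ but the origin is not a one-phase point; then $0\in\Gamma^+\cap\Gamma^-\cap\Pi$, so by the non-degeneracy Lemma~\ref{lem:degen} (both inequalities \eqref{eq:nonn1} and \eqref{eq:nonn2} apply) we have, for all small $r$,
$$
\sup_{\partial B_r^+\cap\Omega^+}u\geq c^+r^\beta,\qquad \inf_{\partial B_r^+\cap\Omega^-}u\leq -c^-r^\beta.
$$
Since $0\in\Gamma$ and $|\nabla u(0)|=0$ automatically (as $u$ vanishes on $\Pi$ and on both $\Gamma^\pm$ near $0$, or simply because a two-phase contact point on $\Pi$ forces the gradient to vanish there), Proposition~\ref{prop:growth} gives the matching upper bound $\sup_{B_r^+}|u|\leq Cr^\beta$. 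Hence the rescalings $u_r(x)=u(rx)/r^\beta$ are uniformly bounded in $C^{1,\alpha}(\overline{B_R^+})$ for every $R$ by Proposition~\ref{prop:c1a} (after rescaling), so along a subsequence $u_{r_j}\to u_0$ in $C^1_{\mathrm{loc}}$, and $u_0\in P_\infty(M')$ for a suitable $M'$.

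The next step is to show $u_0$ is a genuine two-phase global minimizer, i.e.\ that it keeps both signs. This is exactly where non-degeneracy is used: passing to the limit in the two displayed inequalities (the sup and inf are realized at points on $\partial B_1^+$ that converge, and $C^1$ convergence preserves the strict sign once we also invoke that $\Gamma^\pm$ are stable under limits, as noted right after Lemma~\ref{lem:degen}) yields $\sup_{\partial B_1^+\cap\{u_0>0\}}u_0\geq c^+>0$ and $\inf_{\partial B_1^+\cap\{u_0<0\}}u_0\leq -c^-<0$. So $0\in\Gamma^+(u_0)\cap\Gamma^-(u_0)\cap\Pi$.

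Now I would feed $u_0$ into Weiss's monotonicity formula (Lemma~\ref{lem:Weiss}). The function $r\mapsto W(r,0,u)$ is monotone and bounded (by the growth estimate), so it has a limit as $r\to0^+$; by the scaling identity $W(s,0,u_{r})=W(sr,0,u)$ this forces $W(\cdot,0,u_0)$ to be constant, and the equality case in Lemma~\ref{lem:Weiss} tells us $u_0$ is homogeneous of degree $\beta$. Thus $u_0$ is a \emph{homogeneous} global minimizer in $P_\infty(M')$ with $0\in\Gamma^+\cap\Gamma^-\cap\Pi$. But Theorem~\ref{thm:homo} classifies all such homogeneous global minimizers in two dimensions, and in both cases listed there ($u_0=c^+(x_1^+)^\beta$ or $u_0=-c^-(x_1^-)^\beta$) the function is of one sign only. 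This contradicts the fact that $u_0$ takes both signs, completing the proof.

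The main obstacle I anticipate is the passage to the limit that keeps $u_0$ two-phase: one must be careful that the points on $\partial B_r^+$ witnessing the non-degeneracy bounds do not escape to the flat boundary $\Pi$ or collapse together in the limit, and that $C^1$-convergence plus the stability of $\Gamma^\pm$ genuinely yields $u_0\not\equiv$ one sign. Everything else is a fairly mechanical assembly of the tools already established: the growth bound from Proposition~\ref{prop:growth}, compactness from Proposition~\ref{prop:c1a}, homogeneity of the blow-up from Lemma~\ref{lem:Weiss}, and the classification from Theorem~\ref{thm:homo}. (One should also double-check that $|\nabla u(0)|=0$ holds at a two-phase point on $\Pi$; this follows since $u\in C^1$ up to $\Pi$ by Proposition~\ref{prop:c1a}, $u=0$ on $\Pi$ forces the tangential derivative to vanish, and the normal derivative must vanish too because $u$ changes sign arbitrarily close to $0$ — otherwise $u$ would have a fixed sign near $0$.)
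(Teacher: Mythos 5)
Your proof is correct and follows essentially the same route as the paper: blow up at the origin using the growth estimate (Proposition~\ref{prop:growth}) and $C^{1,\alpha}$-compactness (Proposition~\ref{prop:c1a}), show via non-degeneracy (Lemma~\ref{lem:degen}) that both phases survive in the limit, appeal to Weiss's monotonicity for homogeneity, and contradict the classification in Theorem~\ref{thm:homo}. The only cosmetic difference is that you fold the non-vanishing-gradient subcase into a preliminary claim that $|\nabla u(0)|=0$ is automatic, whereas the paper treats it as a separate case by observing that a two-phase point on $\Pi$ with $\nabla u\ne0$ would have $\nabla u\perp\Pi$ and hence, by the implicit function theorem (or, equivalently, your fundamental-theorem-of-calculus argument using continuity of $\partial_1 u$), $u$ would have a fixed sign in a half-ball near the origin, contradicting the two-phase hypothesis.
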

\begin{proof} If there were to be a two-phase branching point touching $\Pi$, then we could by Proposition \ref{prop:growth} and the $C^1$-estimates perform a blow-up at the origin. Due to Lemma \ref{lem:degen}, the blow-up will have both phases non-empty, which by the theorem above is not possible. Now, if there is a two-phase point in $\Pi$ where the gradient does not vanish, then the gradient must be perpendicular to $\Pi$, which would imply that it is a one-phase point, a contradiction.
\end{proof}

\begin{lemma}\label{lem:globala} Suppose $u\geq 0$ is a minimizer of \eqref{eq:minimizer} in $\R^n\cap\{x_1>-A\}$ for some constant $A>0$ and that
$$
0\in \Gamma\cap \Pi,\quad \sup_{B_r}|u|\leq Cr^\beta,
$$
for $r>0$ and some $C>0$. Then $u$ is one of the alternatives in Theorem \ref{thm:homo}.
\end{lemma}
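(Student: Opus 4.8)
The plan is to reduce the lemma to the homogeneous classification of Theorem~\ref{thm:homo} by a two-ended blow-up/blow-down analysis at the origin, using Weiss's monotonicity formula as the bridge.

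First I would set up the rescalings $u_r(x)=u(rx)/r^\beta$. The growth bound $\sup_{B_r}|u|\le Cr^\beta$ keeps $\{u_r\}$ uniformly bounded on each fixed ball, and Proposition~\ref{prop:c1a} (applied to $u_r$) gives uniform $C^{1,\alpha}$-control: for $r\to 0$ the line $\{x_1=-A/r\}$ escapes to infinity, so on a fixed ball only genuine interior estimates are needed once $r$ is small, while for $r\to\infty$ the relevant flat boundary is $\Pi$ itself, where Proposition~\ref{prop:c1a} applies directly. Hence $\{u_r\}$ is precompact in $C^1_{\loc}$. Since $u$ vanishes on $\Pi$, $0\in\Pi$, and $B_\rho^+\subset\R^n\cap\{x_1>-A\}$ for every $\rho>0$, Lemma~\ref{lem:Weiss} applies and $\rho\mapsto W(\rho,0,u)$ is nondecreasing on all of $(0,\infty)$; the growth bound together with a Caccioppoli estimate (comparing $u$ with its harmonic replacement, as in the H\"older estimate) bounds $W(\rho,0,u)$ from above and below, so the limits $W(0^+):=\lim_{\rho\to0}W(\rho,0,u)$ and $W(\infty):=\lim_{\rho\to\infty}W(\rho,0,u)$ both exist and are finite.

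Now take $r_j\to 0$ with $u_{r_j}\to u_0$ in $C^1_{\loc}$. From the scaling identity $W(\rho r,0,u)=W(\rho,0,u_r)$ and the $C^1$-convergence, $W(\cdot,0,u_0)\equiv W(0^+)$, so by the rigidity part of Lemma~\ref{lem:Weiss} the blow-up $u_0$ is homogeneous of degree $\beta$; moreover $u_0\ge0$, $u_0$ vanishes on $\Pi$, and $0\in\Gamma(u_0)$ by non-degeneracy (Lemma~\ref{lem:degen}) and stability of the free boundary. By the classification of two-dimensional homogeneous global minimizers --- this is precisely the content of Theorem~\ref{thm:homo} and its proof, which rests on Lemma~4.2 of \cite{LP} --- the only option consistent with $u_0\ge0$ is $u_0(x)=c^+(x_1^+)^\beta$; in particular the positivity set of $u_0$ is the half-space $\{x_1>0\}$ and a ``two-sided'' profile is excluded. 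Running the same argument along a sequence $r_j\to\infty$ yields a homogeneous limit $u_\infty$ defined on the half-space $\{x_1\ge0\}$ (the rescaled domains $\{x_1>-A/r_j\}$ shrink down to $\{x_1\ge0\}$), which is thus a homogeneous element of $P_\infty(M')$ and, again by Theorem~\ref{thm:homo}, equals $c^+(x_1^+)^\beta$. Since the Weiss functional only integrates over $B_\rho^+\subset\{x_1>0\}$, this gives $W(0^+)=W(1,0,c^+(x_1^+)^\beta)=W(\infty)$.

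The monotone function $W(\cdot,0,u)$ therefore takes the same value at $0^+$ and at $\infty$, hence is constant on $(0,\infty)$; by Lemma~\ref{lem:Weiss} this forces $u$ to be homogeneous of degree $\beta$, and a last application of Theorem~\ref{thm:homo} identifies $u(x)=c^+(x_1^+)^\beta$, one of the alternatives there. I expect the real difficulty to lie in the passage from the blow-up to $u$ itself: one must check that Weiss's formula and the $C^{1,\alpha}$-estimates genuinely apply on the non-standard domain $\{x_1>-A\}$, that the blow-down limit is a bona fide element of $P_\infty$ carrying the same Weiss constant, and --- most delicately --- that nothing survives on the side $\{-A<x_1<0\}$, equivalently that the blow-up is a one-sided half-plane solution rather than a two-sided one; this last point is where the fine classification of \cite{LP} together with non-degeneracy (Lemma~\ref{lem:degen}) has to be invoked. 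The rest is routine manipulation of the scaling identities.
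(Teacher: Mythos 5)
Your overall strategy --- blow-up at $0$, blow-down at $\infty$, identify both limits as $c^+(x_1^+)^\beta$, and then sandwich the Weiss functional to force homogeneity of $u$ itself --- is exactly the route the paper takes, so the architecture of the argument is correct. However, there are two related gaps in the way you handle the blow-up at $r\to 0$.

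First, you write ``Since $u$ vanishes on $\Pi$'' and later ``$u_0$ vanishes on $\Pi$.'' Neither is a hypothesis of the lemma nor an automatic consequence. Since $A>0$, the hyperplane $\Pi=\{x_1=0\}$ lies strictly in the interior of the domain $\{x_1>-A\}$; the fixed boundary on which a boundary condition is imposed (implicitly, in view of how the lemma is invoked in Proposition \ref{prop:cone}) is $\{x_1=-A\}$, not $\Pi$. The condition $0\in\Gamma\cap\Pi$ only says the origin is a free boundary point that happens to lie on the hyperplane $\Pi$.

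Second, and this is the real issue: as you yourself note, when $r\to 0$ the fixed boundary $\{x_1=-A/r\}$ escapes to infinity, so the blow-up $u_0$ is a global minimizer in all of $\R^n$, not in the half-space. Theorem \ref{thm:homo} classifies homogeneous minimizers in $P_\infty(M)$, i.e.\ half-space minimizers vanishing on $\Pi$; it does not apply to $u_0$. A priori, a homogeneous nonnegative interior global minimizer could be a rotated profile $c^+\bigl((e\cdot x)^+\bigr)^\beta$ for some unit vector $e\ne e_1$, in which case $W(1,0,u_0)$ (computed over the half-ball $B_1^+$) need not equal $W(1,0,u_\infty)$ and the sandwich argument does not close. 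The concern you flag about a ``two-sided'' blow-up is actually a non-issue here: $u\ge 0$ forces one-phase. The genuinely delicate point is nailing down the direction $e$. The paper handles this by invoking Theorem 4.1 of \cite{LP}, the \emph{interior} two-dimensional classification of homogeneous one-phase global minimizers, to conclude $u_0=u_\infty$; without some substitute for that step, the equality $W(0^+)=W(\infty)$, which you assert when writing $W(0^+)=W(1,0,c^+(x_1^+)^\beta)$, is unjustified and the argument has a hole.
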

\begin{proof} We prove that $u$ is homogeneous of degree $\beta$ . Then  $u\in P_\infty(C)$ for some $C$ and the result follows from Theorem \ref{thm:homo}.

Since $u$ grows at most like $r^\beta$ at infinity,
$$
u_r(x)=\frac{u(rx)}{r^\beta}
$$
is bounded as $r\to \infty$. Using Proposition \ref{prop:growth}, the $C^1$-estimates and Lemma \ref{lem:degen}, we can extract a subsequence $u_j=u_{r_j}$, with $r_j\to\infty$
so that $u_j\to u_\infty$ where $u_\infty$ is a minimizer of \eqref{eq:minimizer} in $\R^n\cap\{x_1>0\}$, $u_\infty=0$ on $\{x_1=0\}$, $0\in \Gamma(u_\infty)$ and
$$
W(u_\infty,s)=\lim_{r\to \infty} W(u_r,s)=\lim_{r\to \infty} W(u,rs)=\lim_{r\to \infty} W(u,r).
$$
Then Lemma \ref{lem:Weiss} implies that $u_\infty$ is homogeneous of degree $\beta$ and $u\in P_\infty(C)$. From Theorem \ref{thm:homo}, we have $u_\infty=c^+(x_1^+)^\beta$.

We have also that $u_r$ is uniformly bounded when $r$ is small enough. Hence, by Proposition \ref{prop:growth}, the $C^1$-estimates and Lemma \ref{lem:degen}, we can extract a subsequence $u_{r_j}\to u_0$ for some subsequence $r_j\to 0$ such that $u_0$ is a minimizer of \eqref{eq:minimizer} in $\R^n$, $0\in \Gamma(u_\infty)$ and
$$
W(u_0,s)=\lim_{r\to 0} W(u_r,s)=\lim_{r\to 0} W(u,rs)=\lim_{r\to 0} W(u,r),
$$
which is a constant since $W$ is monotone. Hence, $W(u_0,s)$ is constant and then by Lemma \ref{lem:Weiss} $u_0$ must be homogeneous of degree $\beta$. Since $u\geq 0$, Theorem 4.1 in \cite{LP} implies that $u_0=u_\infty$.

Using Lemma \ref{lem:Weiss} again, it follows that
$$
W(u_0,1)\leq W(u,r)\leq W(u_\infty,1)=W(u_0,1),
$$
so that $W(u,r)$ is constant and $u$ must be homogeneous of degree $\beta$.
\end{proof}

\section{Proof of the main theorem}
\renewcommand{\theequation}{4.\arabic{equation}} \setcounter{equation}{0}

In this section we prove our main theorem.
In the proposition that follows we prove that near $\Pi$, the free boundary will have a normal very close to $e_1$ (see Figure \ref{fig:cones}), still in two dimensions. By Corollary \ref{cor:onephase}, any free boundary point touching $\Pi$ must be a one-phase point, hence we can work under the assumption that $u$ has a sign near the origin. In what follows, we will use the notation
$$
K_\delta(z)=\{|x_1-z_1|<\delta\sqrt{(x_2-z_2)^2+\cdots+(x_n-z_n)^2}\}.
$$
\begin{figure}[!h]
\begin{center}
\input{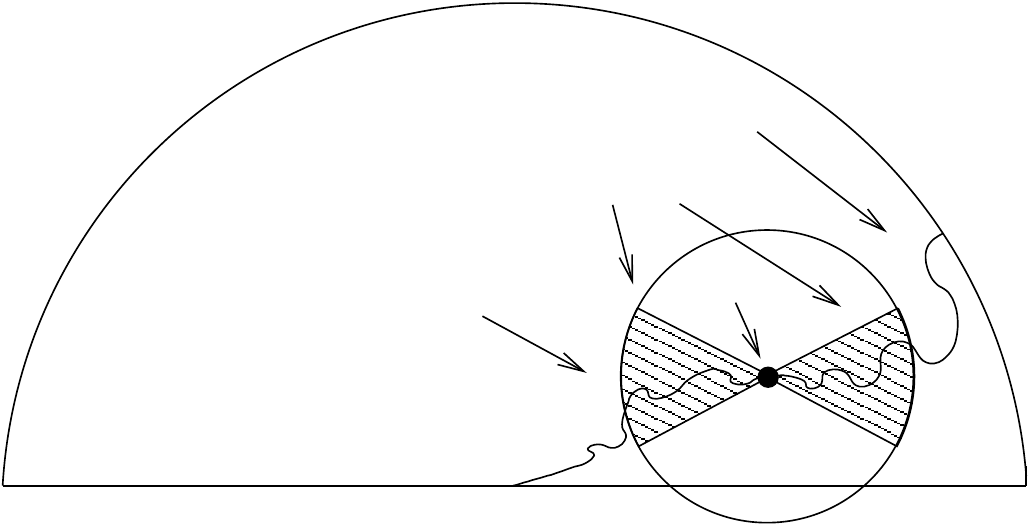_t}
\caption{$\Gamma$ is inside $K_\delta(x)$ when $x$ is close to $\Pi$.}
\label{fig:cones}
\end{center}
\end{figure}
\begin{proposition}\label{prop:cone} Let $u\in P_1(M)$. For any $\delta>0$ there are $\e=\e(M,\delta)$ and $\rho=\rho(M,\delta)$ so that $x\in \Gamma$ and $x_1<\e$ imply
$$
\Gamma\cap B_\rho^+(x)\subset K_\delta(x)\cap B_\rho^+(x).
$$
\end{proposition}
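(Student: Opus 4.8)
The plan is to argue by contradiction via a blow-up argument, the endpoint being the classification of nonnegative global minimizers touching $\Pi$ (Lemma~\ref{lem:globala} and Theorem~\ref{thm:homo}): such a minimizer has free boundary exactly $\Pi$, so a free boundary point sitting at a definite angle to $\Pi$ cannot survive in a blow-up limit. Suppose the conclusion fails for some fixed $\delta>0$; then there are $\e_j,\rho_j\to 0$, minimizers $u_j\in P_1(M)$, points $x^j\in\Gamma(u_j)$ with $(x^j)_1<\e_j$, and points $z^j\in\Gamma(u_j)\cap B_{\rho_j}^+(x^j)$ with $z^j\notin K_\delta(x^j)$. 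First I record the quantitative meaning of this last condition, namely
$$
|z^j_1-x^j_1|\ \ge\ \frac{\delta}{\sqrt{1+\delta^2}}\,|z^j-x^j|,
$$
so the unit vector $\omega^j=(z^j-x^j)/|z^j-x^j|$ has $|\omega^j_1|\ge\delta/\sqrt{1+\delta^2}$. By Corollary~\ref{cor:onephase} together with the remark following the definition of $P_R(M)$, no free boundary point on $\Pi$ is two-phase; a routine compactness argument then yields $r_0=r_0(M)>0$ such that each $u_j$ is one-signed in a ball of radius $r_0$ about the relevant free boundary point, and after translating (via the quoted remark) so that this point is the origin, we may assume $u_j\ge 0$ near $0$ and $x^j,z^j\to 0$, with $0$ a one-phase branching point (otherwise $\nabla u_j(0)\neq 0$ is a positive multiple of $e_1$, the free boundary coincides with $\Pi$ near $0$, and the conclusion is immediate).

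Let $\bar x^j\in\Pi$ be the orthogonal projection of $x^j$, choose a scale $s_j\to 0$, and set $v_j(y)=s_j^{-\beta}u_j(\bar x^j+s_j y)$. Since $\beta=\frac{2}{2-p}$ is the scaling exponent of \eqref{eq:minimizer}, each $v_j$ is again a minimizer of \eqref{eq:minimizer}, and since $\bar x^j\in\Pi$ the rescaled domain always has $\{y_1=0\}$ as flat boundary, with $v_j=0$ there. The scale $s_j$ is to be chosen so that \emph{(i)} $s_j$ has the same order as $|z^j-x^j|$, so that $(x^j-\bar x^j)/s_j\to\tilde x^0$ and $(z^j-\bar x^j)/s_j\to\tilde z^0$ are \emph{distinct}, and \emph{(ii)} $|\bar x^j|/s_j$ remains bounded, so the origin does not escape. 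Given such a scale, the optimal growth — Proposition~\ref{prop:growth} at $x^j$ when $(x^j)_1=0$ (where $\nabla u_j(x^j)=0$), and the interior optimal growth of \cite{PH},\cite{GG} when $(x^j)_1>0$ (an interior one-phase free boundary point, where again the gradient vanishes) — gives the uniform bound $\|v_j\|_{L^\infty(B_R^+)}\le C(M)R^\beta$. Proposition~\ref{prop:c1a} then yields a subsequence converging in $C^1_{\mathrm{loc}}(\overline{\{y_1\ge 0\}})$ to a nonnegative global minimizer $v_0$ of \eqref{eq:minimizer} in $\{y_1>0\}$, vanishing on $\{y_1=0\}$, with the same growth bound.

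By the stability of the free boundary under $C^1$-convergence (recorded after Lemma~\ref{lem:degen}) one has $\tilde x^0,\tilde z^0\in\Gamma(v_0)$, and $\lim(-\bar x^j/s_j)\in\Gamma(v_0)\cap\{y_1=0\}$; translating along $\Pi$, we may take $0\in\Gamma(v_0)\cap\Pi$. Then Lemma~\ref{lem:globala} gives $v_0=c^+(y_1^+)^\beta$ with $c^+>0$ (non-degeneracy), so $\Gamma(v_0)=\{y_1=0\}$. Hence $\tilde x^0$ and $\tilde z^0$ have equal first coordinate, i.e.\ $(\tilde z^0-\tilde x^0)_1=0$; but $\tilde z^0-\tilde x^0=\bigl(\lim|z^j-x^j|/s_j\bigr)\,\omega$ with $\omega=\lim\omega^j$ and $|\omega_1|\ge\delta/\sqrt{1+\delta^2}>0$, and the limiting factor is positive by (i) — the desired contradiction.

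The main obstacle is the scale selection (i)--(ii): one must exclude configurations with $|z^j-x^j|$ much smaller than $(x^j)_1$, for then rescaling at scale $|z^j-x^j|$ either sends $\{y_1=0\}$ to infinity — leaving only an \emph{interior} global minimizer, whose free boundary is a line in an arbitrary direction and contradicts nothing — or collapses the violating pair. Disentangling the scales requires a preliminary blow-up at the origin (which shows $(x^j)_1,|z^j-x^j|\ll|x^j|$) followed by an intermediate blow-up chosen so that $\{y_1=0\}$ stays in view while $\tilde x^0\neq\tilde z^0$; this, and checking that Lemma~\ref{lem:globala} applies in the borderline case $A=0$ (covered verbatim by its proof), are the technical heart of the argument, while the $C^{1,\alpha}$-estimates, optimal growth, free boundary stability, and classification are applied as stated.
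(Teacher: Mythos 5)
Your proposal captures the right overall strategy (contradiction, blow-up, classification via Lemma~\ref{lem:globala}), and your treatment of the case where $(x^j)_1$ and $r_j:=|z^j-x^j|$ are comparable is essentially the paper's Case~1. But there is a genuine gap in the other regime, and your own closing paragraph points at it without resolving it.

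When $(x^j)_1/r_j\to\infty$ there is \emph{no} scale $s_j$ that simultaneously keeps the fixed boundary $\{y_1=0\}$ in view and keeps the rescaled pair separated: rescaling at $r_j$ sends the fixed boundary to $\{y_1=-(x^j)_1/r_j\}\to\{y_1=-\infty\}$; rescaling at $(x^j)_1$ makes $(z^j-x^j)/(x^j)_1\to 0$, so $\tilde x^0=\tilde z^0$; and any intermediate choice $s_j$ with $r_j\ll s_j\ll (x^j)_1$ fails on \emph{both} counts, since $(x^j)_1/s_j\to\infty$ and $r_j/s_j\to 0$. So the ``intermediate blow-up chosen so that $\{y_1=0\}$ stays in view while $\tilde x^0\neq\tilde z^0$'' that you invoke does not exist in this regime. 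Also, your claimed preliminary blow-up at the origin gives $(x^j)_1\ll|x^j|$ but says nothing about the relative size of $(x^j)_1$ and $r_j$, which is the dichotomy that actually matters; and it presumes $x^j\to 0$, which the hypothesis $(x^j)_1<\e_j$ does not give (the proposition concerns every free boundary point with small $x_1$, not those near a particular touching point).

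The paper resolves the regime $(x^j)_1/r_j\to\infty$ by a different mechanism, which is the idea your proposal is missing: rescale at scale $(x^j)_1$, accept that the violating pair collapses to the origin, but before passing to the limit use the \emph{interior} $C^1$-regularity of the free boundary (Theorem~8.2 of \cite{AP}), which holds uniformly in $j$ near $x^j$. A collapsing sequence of free boundary points with direction in $K_\delta^c$ then forces the (uniformly $C^1$) free boundary $\Gamma(v_j)$ to have tangent direction in $\overline{K_\delta^c}$ at the origin, and this directional information survives in the limit $v_0$. Since Lemma~\ref{lem:globala} still gives $v_0=c^+(x_1^+)^\beta$, whose free boundary is $\{x_1=0\}$ with tangent $e_1^\perp\notin \overline{K_\delta^c}$, one gets the contradiction. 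Without this step your argument only rules out Case~1 and does not prove the proposition.

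Two smaller points: centering at $\bar x^j$ rather than $x^j$ is a harmless variant in Case~1 (the paper centers at $x^j$, so the fixed boundary sits at $\{x_1=-x^j_1/r_j\}\to\{x_1=-A\}$, which is exactly why Lemma~\ref{lem:globala} is stated for half-spaces $\{x_1>-A\}$ with $A\ge 0$, and $0\in\Gamma(v_0)$ is then an interior point — no translation along $\Pi$ or ``$0\in\Gamma(v_0)\cap\Pi$'' is needed). And the sign reduction is cleaner than you make it: Corollary~\ref{cor:onephase} says the touching point is one-phase, so in the blow-up picture the limit is one-signed; the $r_0(M)$ compactness you invoke is not needed and would require more care than ``routine.''
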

\begin{proof} We argue by contradiction and we treat only the case when $u\geq 0$ near the origin. If the assertion is not true then for some $\delta>0$ there are sequences $u_j\in P_1(M)$, $\e_j\to 0$, $x^j\in \Gamma(u_j)$ and
$$
y^j\in \Gamma(u_j)\cap K_\delta^c(x^j).
$$
Let $r_j=|x^j-y^j|$. We split the proof into two different cases, depending on whether $y^j$ is very close to $x^j$ or not.

\noindent {\bf Case 1: $x^j_1/r_j$ bounded.} By choosing a subsequence we can assume $x^j/r_j\to A<\infty$. Let
$$
v_j(x)=\frac{u_j(r_jx+x^j)}{r_j^\beta}.
$$
Then $v_j$ satisfies:
\begin{enumerate}
\item From the optimal growth
$$
\sup_{B_R}|v_j|\leq CR^\beta
$$
for $Rr_j<1$.
\item $v_j$ is a minimizer of \eqref{eq:minimizer} in $B_{1/r_j}\cap \{x_1>-x^j/r_j\}$.
\item $v_j=0$ on $B_{1/r_j}\cap \{x_1=-x^j/r_j\}$.
\item $0\in \Gamma(v_j)$.
\item $z_j=(x^j-y^j)/r_j\in \dd B_1\cap K_\delta^c\cap \Gamma(v_j)$.
\item $v_j\geq 0$ in $B_R$ for $R$ small enough or $j$ large enough.
\end{enumerate}
Therefore, invoking Lemma \ref{lem:degen} and using the $C^1$-estimates for minimizers, we can assume that $v_j\to v_0$ locally uniformly and $z_j\to z_0$ such that:
\begin{enumerate}
\item $$
\sup_{B_R}|v_0|\leq CR^\beta,\textup{ for all $R>0$}.
$$
\item $v_0$ is a minimizer of \eqref{eq:minimizer} in $\R^n\cap \{x_1>-A\}$.
\item $v_0=0$ on $\R^n\cap \{x_1=-A\}$.
\item $0\in \Gamma(v_0)$.
\item $z_0\in \dd B_1\cap K_\delta^c\cap \Gamma(v_0)$.
\item $v_0\geq 0$.
\end{enumerate}
Lemma \ref{lem:globala} implies that $v_0=c^+(x_1^+)^\beta$. This contradicts (5).

\noindent {\bf Case 2: $x^j_1/r_j \to \infty$.} Define in this case
$$
v_j(x)=\frac{u_j(x^j_1x+x^j)}{(x^j_1)^\beta}.
$$
Then the following holds:
\begin{enumerate}
\item From the optimal growth
$$
\sup_{B_R}|v_j|\leq CR^\beta
$$
for $Rx^j_1<1$.
\item $v_j$ is a minimizer of \eqref{eq:minimizer} in $B_{1/x^j_1}\cap \{x_1>-1\}$.
\item $v_j=0$ on $B_{1/x^j_1}\cap \{x_1=-1\}$.
\item $0\in \Gamma(v_j)$.
\item $z_j=(x^j-y^j)/x^j_1\in \dd B_1\cap K_\delta^c\cap \Gamma(v_j)$.
\item $v_j\geq 0$ in $B_R$ for $R$ small enough or $j$ large enough.
\end{enumerate}
From the assumption on $x^j_1$ and $r_j$ it is clear that $z_j\to 0$. Moreover, from Theorem 8.2 in \cite{AP}, $\Gamma(v_j)$ is a uniform (in $j$) $C^1$-graph near the origin. Hence, (5) implies that $\Gamma(v_j)$ has asymptotically a tangent lying in $K_\delta^c$. Therefore, we can assume $v_j\to v_0$ locally uniformly where $v_0$ satisfies:
\begin{enumerate}
\item
$$
\sup_{B_R}|v_0|\leq CR^\beta\textup{ for all $R>0$.}
$$
\item $v_0$ is a minimizer of \eqref{eq:minimizer} in $\R^n\cap \{x_1>-1\}$.
\item $v_0=0$ on $\R^n\cap \{x_1=-1\}$.
\item $0\in \Gamma(v_0)$.
\item $\Gamma(v_0)$ has a tangent in $K_\delta^c$ at the origin.
\item $v_0\geq 0$.
\end{enumerate}
From Lemma \ref{lem:globala} we have $v_0=c^+(x_1^+)^\beta$. This is in contradiction with (5).
\end{proof}

Now the situation is as follows. Away from $\Pi$, Theorem 8.2 in \cite{AP} applies, so there the free boundary is a $C^1$-graph. Moreover, from Proposition \ref{prop:cone}, we know that the normal of the free boundary approaches $e_1$ as we approach $\Pi$. This is enough to assure that the free boundary is a uniform $C^1$-graph up to $\Pi$. We spell out the details below.

\begin{proof}[Proof of Theorem \ref{thm:main}] Since any free boundary point in $\Pi$ must be a one-phase point, we can assume $0\in \Gamma^+\cap \Pi$. Denote by $\nu_x$ the normal of $\Gamma$ at a point $x$. We need to prove that $\nu_x$ is uniformly continuous. From Theorem 8.2 in \cite{AP} it follows that $\Gamma$ is a $C^1$-graph away from $\Pi$. In particular, around any point $x\in \Gamma$, $\nu$ is continuous with a modulus of continuity $\sigma(\cdot/x_1)$, where $\sigma$ is some modulus of continuity. Moreover, by Proposition \ref{prop:cone}, we know that for any $\tau>0$, there is a $\delta_\tau$ such that $x_1<\delta_\tau$ implies $\|\nu_x-e_1\|<\tau/2$.

Take two points $x,y\in \Gamma$. Now we split the proof into three cases:\\
\noindent {\bf Case 1:} $x_1,y_1<\delta_\tau/2$. Then obviously $\|\nu_x-\nu_y\|\leq \tau$.

\noindent{\bf Case 2:} $x_1<\delta_\tau/2$ and $y_1>\delta_\tau/2$. Then $|x-y|<\delta_\tau/2$ implies $\|\nu_x-\nu_y\|\leq \tau$.

\noindent{\bf Case 3:} $x_1,y_1>\delta_\tau/2$. From the arguments above,
$$
\|\nu_x-\nu_y\|\leq \sigma(2|x-y|/\delta_\tau),
$$
which implies that $\|\nu_x-\nu_y\|\leq \tau$ if $|x-y|$ is small enough.

Hence we have proved that $\nu_x$ is uniformly continuous.
\end{proof}
\section{Acknowledgments} Part of this work was carried out when the authors were at MSRI,
Berkeley, during the program ''Free boundary problems''. We thank everyone at this institute for their great hospitality. The second author was at
 this time also employed by MSRI. Moreover, the first author was partially supported by Stockholm University.
 \bibliographystyle{amsplain}
\bibliography{ref.bib}

 \end{document}